\apptocmd{\thebibliography}{\setlength{\itemsep}{4pt}}{}{}
\numberwithin{equation}{section}
\newtheorem{theorem}{Theorem}
\numberwithin{theorem}{section}
\theoremstyle{example}
\newtheorem{example}{Example}[section]
\newtheorem{proposition}[theorem]{Proposition}
\newtheorem{lemma}[theorem]{Lemma}
\newtheorem{remark}[theorem]{Remark}
\DeclareMathOperator{\Vol}{Vol}
\title[Characterization of circuits with maximal number of positive solutions]{Characterization of circuits supporting polynomial systems with the maximal number of positive solutions}
\author{Boulos El Hilany}
\address{Laboratoire de Math\'ematiques\\
         Universit\'e Savoie Mont Blanc\\
         73376 Le Bourget-du-Lac Cedex\\
         France}
\email{boulos.el-hilany@univ-smb.fr}
\urladdr{https://lama.univ-savoie.fr/~elhilany/}
\begin{document}
\maketitle

\begin{abstract}
A polynomial system with $n$ equations in $n$ variables supported on a set $\mathcal{W}\subset\mathbb{R}^n$ of $n+2$ points has at most $n+1$ non-degenerate positive solutions. Moreover, if this bound is reached, then $\mathcal{W}$ is minimally affinely dependent, in other words, it is a circuit in $\mathbb{R}^n$. For any positive integer number $n$, we determine all circuits $\mathcal{W}\subset\mathbb{R}^n$ which can support a polynomial system with $n+1$ non-degenerate positive solutions. Restrictions on such circuits $\mathcal{W}$ are obtained using Grothendieck's real dessins d'enfant, while polynomial systems with $n+1$ non-degenerate positive solutions are constructed using Viro's combinatorial patchworking.
\end{abstract}

\section{Introduction}

The support $\mathcal{W}$ of a polynomial $f\in\mathbb{C}[z_1^{\pm 1},\ldots,z_n^{\pm 1}]$ is the set of points $w\in\mathbb{Z}^n$ corresponding to monomials $z^w$ appearing with a non-zero coefficient in $f$. The support of a polynomial system \begin{equation}\label{sys:Gen}
f_1(z)=\cdots=f_n(z)=0,\tag{$\star$}
\end{equation} in $n$ variables is the union of supports of the individual polynomials $f_1,\ldots,f_n$. An important result by Kouchnirenko in ~\cite{Kou} shows that the number of non-degenerate solutions of ~\eqref{sys:Gen} in $(\mathbb{C}^*)^n$ is at most $n!\Vol(\Delta_\mathcal{W})$, where $\Vol(\Delta_\mathcal{W})$ is the standard Euclidean volume of the convex hull $\Delta_\mathcal{W}$ of $\mathcal{W}$. Real polynomial systems (defined by polynomials with real coefficients) appear ubiquitously in mathematics, and we are interested in estimating their numbers of real solutions. Although both Bezout's and Kouchnirenko' bounds hold true for the number of non-degenerate solutions in $(\mathbb{R^*})^n$ as well, the resulting bounds are not always sharp. This typically happens when $\mathcal{W}$ has few elements comparatively to $\Delta_\mathcal{W}\cap\mathbb{Z}^n$. A particular attention is paid on the positive solutions of ~\eqref{sys:Gen}, which are the solutions contained in the positive orthant of $\mathbb{R}^n$. Indeed, assume that there exists a sharp upper bound $N_{\mathcal{W}}$ on the number of non-degenerate positive solutions of ~\eqref{sys:Gen} that depends only on $\mathcal{W}$. Then this $N_{\mathcal{W}}$ also bounds the number of solutions contained in any other orthant, and thus ~\eqref{sys:Gen} will not have more than $2^nN_{\mathcal{W}}$ solutions in $(\mathbb{R}^*)^n$. Descartes showed in 1637 that we have $N_{\mathcal{W}}=|\mathcal{W}| -1$ for $n=1$, but still, before Khovanskii's book ~\cite{Kho}, it was not clear that such $N_{\mathcal{W}}$ even exists for $n\geq 2$. He proved that the number of non-degenerate positive solutions of a system of $n$ equations in $n$ variables having a total of $n+k+1$ distinct monomials is bounded by $2^{n+k \choose 2}(n+1)^{n+k}$. This bound was later improved by F. Bihan and F. Sottile ~\cite{BS} to $\frac{e^2 + 3}{4}2^{k \choose 2}n^k$, however only a handful of sharp fewnomial bounds are known (c.f. ~\cite{B,BE,LRW}). When $k=0$, the system ~\eqref{sys:Gen} can be reduced to a system where each polynomial is a binomial, and thus has at most one non-degenerate positive solution. The previous bounds on the number of non-degenerate positive solutions also hold true for (generalized) polynomial systems with support $\mathcal{W}\subset\mathbb{R}^n$, which are systems where each equation is a linear combination of monomials with real exponents.

 One of the first non-trivial cases arises when $n\geq 2$ and $k=1$, in which case the support is a set of $n+2$ points in $\mathbb{R}^n$. F. Bihan ~\cite{B} proved that any polynomial system supported on such $\mathcal{W}$ has at most $n+1$ non-degenerate positive solutions. Moreover, if this bound is reached, then $\mathcal{W}$ is minimally affinely dependent, which means that it is a \textit{circuit} in $\mathbb{R}^n$. In the following we assume that $\mathcal{W}$ is a circuit in $\mathbb{R}^n$. Note that up to a scalar multiplication, there exists a unique affine relation on $\mathcal{W}$. Polynomial systems supported on a circuit in $\mathbb{Z}^n$ whose all non-degenerate complex solutions are positive have been studied in ~\cite{B1} (such systems are called \textit{maximally positive}). As a main result, it is given for any positive integer $n$ a finite list of circuits in $\mathbb{Z}^n$ that can support maximally positive systems up to the obvious action of the group of invertible integer affine transformations of $\mathbb{Z}^n$. Also for the circuit case, a very recent generalization of the Descartes' rule of sign was developed by F. Bihan and A. Dickenstein in ~\cite{BD}. This gave some conditions on both the circuit and the coefficient matrix that are necessary for the system to have $n+1$ non-degenerate positive solutions. More precisely, the authors in ~\cite{BD} show that if such system has $n+1$ non-degenerate positive solutions, then all maximal minors of the coefficient matrix are nonzero and any affine relation $\sum_{i=1}^{n+2} \lambda_iw_i=0$ on $\mathcal{W}$ has the same number (up to 1 if $n$ is odd) of positive coefficients as that of negative ones. \

In this paper, we completely characterize the circuits which are supports of polynomial systems with $n+1$ non-degenerate positive solutions.

\begin{theorem}\label{Main:Th.0}
A circuit $\mathcal{W}$ in $\mathbb{R}^n$ supports a system with $n+1$ non-degenerate positive solutions if and only if there exists a bijection 
$$ 
  \begin{array}{ccc}
   \lbrace 1,\ldots,n+2 \rbrace & \longrightarrow & \mathcal{W} \\
    
    i & \longmapsto  & w_i
  \end{array}
$$ such that every affine relation on $\mathcal{W}$ can be written as $$\sum_{i=1}^{s}\alpha_iw_i = \sum_{s+1}^{n+2}\beta_iw_i,$$ where $s=\lfloor (n+2)/2 \rfloor$ and all $\alpha_i$, $\beta_i$ are positive numbers which satisfy $$\sum_{i=1}^{r}\alpha_i< \sum_{i= s + 1 }^{s + r }\beta_i< \sum_{i=1}^{r+1}\alpha_i \text{ \quad for\quad} r=1,\ldots, s-1 \text{\quad if\quad} n \text{\quad is even}$$ or $$\sum_{i=1}^{r}\alpha_i< \sum_{i=s+2  }^{s + r +1 }\beta_i< \sum_{i=1}^{r+1}\alpha_i \text{ \quad for\quad} r=1,\ldots, s-1 \text{\quad if\quad} n \text{\quad is odd.}$$

\end{theorem}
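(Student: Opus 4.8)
The plan is to reduce the count of non-degenerate positive solutions to a one-variable problem, prove the bound $n+1$ by a Rolle-type argument, and then extract the interleaving inequalities from the real dessin d'enfant while producing the matching systems by patchworking. Since $\mathcal{W}$ is a circuit, after a multiplicative change of coordinates and Gaussian elimination among the equations (operations that preserve both the support and the number of positive solutions) the system \eqref{sys:Gen} becomes equivalent, as in~\cite{B}, to counting the solutions in a maximal interval $I\subset\mathbb{R}$ of a single equation $\varphi(t)=1$, where
\[
\varphi(t)=\prod_{i=1}^{n+2}(t-t_i)^{\lambda_i},
\]
the exponents $(\lambda_1,\dots,\lambda_{n+2})$ are exactly the affine relation on $\mathcal{W}$ (so that $\lambda_i=\alpha_i>0$ for $i\le s$ and $\lambda_i=-\beta_i<0$ for $i>s$, with $\sum_i\lambda_i=0$), and the nodes $t_i\in\mathbb{R}\cup\{\infty\}$ together with the interval $I$ are governed by, and can be prescribed through, the coefficient matrix. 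On $I$ every affine factor keeps a fixed sign, so all nodes lie in the closure of $I$ at its endpoints, and $\varphi$ is smooth and positive in the interior.

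Next I would recover and sharpen the bound. Writing $\psi=\log\varphi$, one has $\psi'(t)=\sum_i \lambda_i/(t-t_i)$, whose numerator over the common denominator $\prod_i(t-t_i)$ has degree at most $n$: the coefficient of $t^{n+1}$ equals $\sum_i\lambda_i=0$. Hence $\psi'$ has at most $n$ real zeros, and by Rolle the equation $\psi=0$ (equivalently $\varphi=1$) has at most $n+1$ solutions in $I$, which reproves the bound of~\cite{B}. The point is the equality analysis: $n+1$ solutions force all $n$ critical points of $\varphi$ to be real, simple, and contained in $I$, interlacing the $n+1$ solutions, and the corresponding critical values of $\varphi$ to alternate strictly about the level $1$; moreover the two endpoint limits of $\varphi$ (each being $0$ or $+\infty$ according to whether the boundary node is a zero or a pole) must cooperate with this alternation. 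It is precisely the parity count of zeros versus poles among the $\lambda_i$, which fixes the endpoint types, that produces the even/odd dichotomy of Theorem~\ref{Main:Th.0} and the shift of the $\beta$-indexing in the odd case.

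For necessity I would feed this maximality into the real dessin d'enfant of $\varphi$, that is, the real part of $\varphi^{-1}(\mathbb{RP}^1)$ with its real structure. Strict alternation of the $n$ critical values about $1$ forces the dessin to be of the extremal chain (caterpillar) type; reading the cyclic order of zeros and poles along $\mathbb{RP}^1$ then supplies the bijection $\{1,\dots,n+2\}\to\mathcal{W}$ in which the $\alpha$-nodes and $\beta$-nodes alternate as prescribed. Translating the statement ``consecutive critical values lie on opposite sides of $1$'' into the combinatorics of the chain, the requirement that the image of each successive arc winds the correct number of times around $0$ and $\infty$ becomes a comparison of cumulative exponents between consecutive nodes, yielding exactly $\sum_{i=1}^{r}\alpha_i<\sum(\text{the matching }\beta_i)<\sum_{i=1}^{r+1}\alpha_i$ for $r=1,\dots,s-1$. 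Conversely, for sufficiency I would assume the interleaving holds for some ordering and build the system by Viro's combinatorial patchworking: for a circuit there are exactly two regular triangulations of $\Delta_{\mathcal{W}}$, selected by the sign split of $\lambda$, and the interleaving inequalities are precisely the data guaranteeing that the patchwork glues $n+1$ positive cells; equivalently one places the nodes $t_i$ in the order dictated by the bijection and checks that each successive inequality certifies one more crossing of the level $1$ by $\varphi$ on $I$, after which the invertible reduction lifts this univariate certificate back to an honest polynomial system supported on $\mathcal{W}$.

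The main obstacle I anticipate is the necessity step: converting the analytic maximality condition (all critical points real and in $I$, with critical values strictly alternating about $1$) into the clean, position-independent interleaving of exponent partial sums. The difficulty is that the critical values depend on both the nodes $t_i$ and the exponents, so one must argue that the only constraint surviving over all admissible node configurations is the interlacing of the $\alpha$- and $\beta$-partial sums; making the dessin bookkeeping rigorous, so that no critical point is wasted and the alternation is genuinely forced rather than merely permitted, and doing so uniformly across the even/odd endpoint patterns, is the crux. By comparison the patchworking construction is essentially mechanical once the correct ordering and sign data have been identified.
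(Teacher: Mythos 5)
Your overall architecture coincides with the paper's (Gale duality to a univariate $\varphi=\prod_i P_i^{\lambda_i}$, Rolle plus the degree bound on the logarithmic derivative for the upper bound, dessins d'enfant for necessity, Viro polynomials for sufficiency), but the necessity argument has a genuine gap, and it is exactly the one you flag as ``the crux''. Alternation of the critical values about the level $1$ and a ``caterpillar''-shaped dessin are not the mechanism that produces the inequalities, and winding-number bookkeeping on successive arcs cannot yield them without two ingredients absent from your plan. First, the combinatorial rigidity forced by the cycle rule: (a) consecutive special points outside $\Delta_+$ must alternate zero/pole (Lemma~\ref{lambda} --- this, not ``reading the cyclic order'', is what produces the bijection of the theorem and the value of $s$); and (b) the $n$ non-special critical points $c_2,\dots,c_{n+1}$ pair off bijectively with the interior special points $x_2,\dots,x_{n+1}$ (Lemmata~\ref{L:c_and_c_no_neighbors}, \ref{L:p_and_2c_no_neigh}, \ref{L:c_and_v_no_neighbors} and Proposition~\ref{prop:Main1}), which then restricts the complex edges at $x_i$ to go only to $x_{i-1}$, $x_{i+1}$ and $c_i$ (Lemmata~\ref{L:key:x_1x_(n+2)} and \ref{L:key:x_i}). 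Second, and decisively, a quantitative identity tying the exponents to the graph: the valency $V_i$ of the vertex $x_i$ equals $2|\lambda_i|$. Combined with the edge inventory above, writing $N_{i,i+1}$ for the number of complex edges joining $x_i$ to $x_{i+1}$, one gets $V_1=N_{1,2}+2$, $V_i=N_{i-1,i}+N_{i,i+1}+4$ for $2\le i\le n+1$, and $V_{n+2}=N_{n+1,n+2}+2$; the inequalities of Theorem~\ref{Main:Th.0} are then literally the statement that all $N_{i,i+1}\ge 0$. Your proposal never relates $|\lambda_i|$ to any counted quantity (the valency identity is the rigorous form of your winding heuristic), so the partial-sum inequalities cannot be extracted from it as written.

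Two further points. Your sufficiency sketch (the two regular triangulations of a circuit, patchworking gluing $n+1$ positive cells) is essentially the Phillipson--Rojas route, which the paper deliberately avoids; the paper instead chooses heights $h_i$ so that the lower hull of the univariate Viro--Gale polynomial $G_t$ is a chain of $n+1$ segments, and there the hypothesis enters concretely: the interleaving inequalities say precisely that the exponents $p_0<p_1<\dots<p_{n+1}$ (alternating partial sums of the $\alpha$'s and $\beta$'s) are strictly increasing, so each facial subpolynomial is a binomial with one monomial from each of the two products, hence with coefficients of opposite signs and a unique simple positive root (Lemma~\ref{L:Virotoroots} and Proposition~\ref{Propn+1}). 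Either route can be made to work, but ``the inequalities are precisely the data guaranteeing the gluing'' needs this identification to count as a proof. Finally, a factual slip: it is false that all nodes lie in the closure of $I$ at its endpoints; only two special points are endpoints of $\Delta_+$, and the remaining $n$ lie in $\mathbb{R}P^1\setminus\overline{\Delta_+}$. This is not cosmetic, since the entire neighbor analysis above takes place on that side of the circle.
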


%
%
%The support of ~\eqref{sys:main}, which is the set $\mathcal{W}:=\{w_0,w_1,\cdots,w_{n+1}\}$ where $w_0:=(0,\cdots , 0)$, has $n+2$ elements and is non-degenerate, therefore we say that ~\eqref{sys:main} is supported on a circuit.\\

F. Bihan proved in ~\cite{B1} that if a circuit in $\mathbb{Z}^n$ supports a maximally positive system with $n+1$ non-degenerate positive solutions, then it has a primitive affine relation (i.e. affine relation with coprime integer coefficients) as in Theorem ~\ref{Main:Th.0} with $\alpha_1=\beta_{n+2}=1$ and all other coefficients are equal to two. This can be seen as a consequence of Theorem ~\ref{Main:Th.0} (see Example ~\ref{Ex:1}). Indeed, if $\mathcal{W}$ supports a maximally positive system with $n+1$ non-degenerate positive solutions, then the subgroup of $\mathbb{Z}^n$ generated by $\mathcal{W}$ is $\mathbb{Z}^n$. Moreover, if $\sum_{i=1}^{s}\alpha_iw_i = \sum_{s+1}^{n+2}\beta_iw_i$ is a primitive affine relation, then $\sum_{i=1}^{s}\alpha_i = \sum_{s+1}^{n+2}\beta_i=n+1$ (see ~\cite{B1} for more details), which together with inequalities in Theorem ~\ref{Main:Th.0} imply the desired equalities.\

In order to prove Theorem ~\ref{Main:Th.0}, one can reduce to the case where $\mathcal{W}\subset\mathbb{Z}^n$. Indeed, assume that a system with support $\mathcal{W}=\{w_1,\ldots,w_{n+2}\}\subset\mathbb{R}^n$ has $n+1$ non-degenerate positive solutions. Then for $i=1,\ldots,n+2$, points $\tilde{w}_i\in\mathbb{Q}^n$ that are sufficiently close to $w_i$ support a (generalized) polynomial system with the same coefficients and having at least $n+1$ non-degenerate positive solutions, and thus exactly this number of non-degenerate positive solutions since $n+1$ is an upper bound. Now, multiplying all $\tilde{w}_i$ by some integer, one acquires a system supported on a circuit in $\mathbb{Z}^n$ with $n+1$ non-degenerate positive solutions. Since the inequalities appearing Theorem ~\ref{Main:Th.0} are strict, if the first circuit $\mathcal{W}$ satisfies them, then they are satisfied by the new circuit $\tilde{\mathcal{W}}$ as well, and vice-versa.

 Assume that $\mathcal{W}=\{w_1,\ldots,w_{n+2}\}$ is a set of $n+2$ points in $\mathbb{Z}^n$  and consider any affine relation $\sum_{i=1}^{n+2} \lambda_iw_i=0$ with integer coefficients. After a small perturbation, any system with $n$ equations in $n$ variables $z=(z_1,\ldots,z_n)$ and supported on $\mathcal{W}$ can be reduced by Gaussian elimination to a system \begin{equation}\label{eq:sys:circ}
z^{w_i}=P_i(z^{w_{n+1}})\text{\quad for \quad} i=1,\ldots,n, 
 \end{equation} having at least the same number of non-degenerate positive solutions, where $P_1,\ldots,P_{n+1}$ are real polynomials of degree 1 in one variable (see Section ~\ref{Sec1}). We define in Section ~\ref{Sec1} a real rational function $\varphi(y)=\prod_{i=1}^{n+1}P_i^{\lambda_i}$, where ~\eqref{eq:sys:circ} can be completely recoverd from the function $\varphi$ and vice-versa. We apply \textit{Gale duality} (c.f. ~\cite{B1,BS1,BS}) to obtain a correspondence between non-degenerate solutions of a system supported on $\mathcal{W}$ and those of $\varphi(y) = 1$. This correspondence restricts to a bijection between non-degenerate positive solutions of the system and the solutions contained in the (possibly empty) interval $\Delta_+:=\{y\in\mathbb{R}|\ P_i(y)>0 \text{\ for\ }i=1,\ldots,n+1\}$. After homogenization, we get a real rational map $\mathbb{C}P^1\rightarrow\mathbb{C}P^1$ that we denote again by $\varphi$. The \textit{real dessin d'enfant} $\Gamma$ associated to $\varphi: \mathbb{C}P^1\rightarrow\mathbb{C}P^1$, is the inverse image of the real projective line under $\varphi$. Given that $\varphi(y)=1$ has $n+1$ solutions in $\Delta_+$, we deduce by analyzing $\Gamma$ in Section ~\ref{Sec2} the inequalities of Theorem ~\ref{Main:Th.0}.
 
  The solutions of $\varphi(y)=1$ in $\Delta_+$ are roots of the \textit{Gale polynomial} \begin{equation}\label{eq:Gale0}
 G(y)=\prod_{\lambda_i>0}P_i^{\lambda_i}(y) - \prod_{\lambda_i<0}P_i^{-\lambda_i}(y)
\end{equation} in the same interval. In ~\cite{PR} (see the proof of Lemma 1.8), K. Phillipson and J.-M. Rojas construct polynomial systems ~\eqref{eq:sys:circ} supported on a circuit in $\mathbb{Z}^n$ with $n+1$ non-degenerate positive solutions using \textit{Viro polynomials} $P_{i,t}(y)=a_i+t^{\alpha_i}b_i$, where $a_i,b_i, \alpha_i\in\mathbb{R}$, and $t>0$ is a parameter that will be taken small enough. They apply the version of Viro's \textit{combinatorial patchworking} developed in ~\cite{St} which involves mixed subdivision of Newton polytopes. Here, we also use Viro polynomials $P_{i,t}$, but look directly at the roots of the corresponding Gale polynomial in $\Delta_+$. The inequalities in Theorem ~\ref{Main:Th.0} appear explicitly as being necessary to construct polynomial systems ~\eqref{eq:sys:circ} with $n+1$ non-degenerate positive solution using Viro polynomials $P_{i,t}$.\\

\textbf{Acknowledgments:} The author is grateful to Fr\'ed\'eric Bihan for very helpful discussions and suggestions.

\section{Technical Preamble}\label{Sec1}

Given a system of $n$ polynomials in $n$ variables with total support a circuit $\mathcal{W}=\{w_1,\ldots,w_{n+2}\}$, perturbing slightly its coefficients if necessary, we may assume that the coefficients of $z^{w_1} ,\ldots, z^{w_n}$ in the system form an invertible matrix (a small perturbation does not decrease the number of non-degenerate positive solutions). Since we are only interested in non-degenerate positive solutions, we may assume that $w_{n+2}=\bm{0}$ and we transform ~\eqref{sys:Gen} via Gaussian elimination  into an equivalent system such that the coefficients of $z^{w_1} ,\ldots, z^{w_n}$ form a diagonal matrix

\begin{equation}\label{sys:main}
z^{w_i}=P_i(z^{w_{n+1}})\text{\quad for \quad} i=1,\ldots,n,
\end{equation} where $P_i(z^{w_{n+1}})=a_i + b_iz^{w_{n+1}}$ for $i=1,\ldots,n$. We start by giving a brief description about \textit{Gale duality} for the system ~\eqref{sys:main} (c.f. ~\cite{B1,B,BS1}). We use the linear relations on $\mathcal{W}$ to obtain a special polynomial in one variable, called \textit{Gale polynomial}. We have that any integer linear relation among the exponent vectors of $\mathcal{W}$
\begin{equation}\label{eq:affinerel}
\sum_{i=1}^{n+1}\lambda_iw_i=0
\end{equation} gives a monomial identity $$ (z^{w_1})^{\lambda_1}\cdots (z^{w_n})^{\lambda_n}(z^{w_{n+1}})^{\lambda_{n+1}}=1.$$ If we substitute the polynomials $P_i(z^{w_{n+1}})$ of ~\eqref{sys:main} into this identity, we obtain a consequence of the latter equation
 
\begin{equation} \label{Gale.sys.2}
 (P_1(z^{w_{n+1}}))^{\lambda_1}\cdots(P_n(z^{w_{n+1}}))^{\lambda_n}(z^{w_{n+1}})^{\lambda_{n+1}}=1.
\end{equation} Under the substitution  $y = z^{w_{n+1}}$, the polynomials $P_i(z^{w_{n+1}})$ become linear functions $P_i(y)$. Set $P_{n+1}(y)= y$. Then ~\eqref{Gale.sys.2} becomes

\begin{equation} \label{Gale.sys.3}
 \displaystyle \overset{n+1}{\underset{i=1}{\prod}} P_i(y)^{\lambda_i} = 1,
\end{equation} which constitutes a \textbf{Gale transform} associated to the system ~\eqref{sys:main}. Recall that $$\Delta_+=\{y\ |\ P_i(y)>0\ \text{for}\ i=1,\ldots , n+1\}.$$ We can write equivalently ~\eqref{Gale.sys.3} as $G(y)=0$, where $G$ is the \textbf{Gale polynomial} defined by

\begin{equation}\label{Gale4}
G(y)= \prod_{\lambda_i>0} P_i^{\lambda_i}(y) - \prod_{\lambda_i<0} P_i^{-\lambda_i}(y).
\end{equation}

\begin{proposition}{~\cite{BS}} \label{Th.bij.Gale}
The association
$$\phi_{w_{n+1}}\ :\ \mathbb{R}_+^n \ni z\ \longmapsto z^{w_{n+1}} =: y\in\mathbb{R}_+$$
is a bijection between solutions $z\in\mathbb{R}_+^n$ of the diagonal system ~\eqref{sys:main} and solutions $y\in\Delta_+$ of ~\eqref{Gale.sys.3} which restricts to a bijection between their non-degenerate solutions.
\end{proposition}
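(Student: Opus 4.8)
The plan is to exhibit the map $\phi_{w_{n+1}}$ explicitly and verify it is a well-defined bijection by constructing its inverse from the diagonal structure of \eqref{sys:main}. First I would observe that for a positive solution $z\in\mathbb{R}_+^n$ of \eqref{sys:main}, every monomial $z^{w_i}$ is a positive real number, so in particular $y=z^{w_{n+1}}$ is positive, and the defining equations $z^{w_i}=P_i(y)$ force $P_i(y)=z^{w_i}>0$ for $i=1,\ldots,n$; since also $P_{n+1}(y)=y>0$, the image point $y$ lands in $\Delta_+$. The monomial identity \eqref{eq:affinerel} then propagates into \eqref{Gale.sys.2}, which under the substitution $y=z^{w_{n+1}}$ is exactly \eqref{Gale.sys.3}, so $y$ solves the Gale transform. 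This shows $\phi_{w_{n+1}}$ maps the relevant solution set into $\Delta_+\cap\{G=0\}$.

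The heart of the argument is surjectivity and injectivity, which I would handle together by inverting the Gale construction. Given $y\in\Delta_+$ solving \eqref{Gale.sys.3}, I would recover a candidate solution $z$ by solving the system of $n$ linear-exponent equations $z^{w_1}=P_1(y),\ldots,z^{w_n}=P_n(y)$ together with $z^{w_{n+1}}=y$, treating the logarithms $\log z_j$ as unknowns. Because $w_1,\ldots,w_{n+1}$ are exponent vectors subject to the single affine relation \eqref{eq:affinerel}, the relevant $(n+1)\times n$ exponent matrix has rank $n$; the role of the Gale equation \eqref{Gale.sys.3} is precisely to guarantee that the overdetermined logarithmic system is consistent, i.e. that the value prescribed for $z^{w_{n+1}}$ is compatible with the values $P_i(y)$ prescribed for the first $n$ monomials. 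Positivity of all $P_i(y)$ on $\Delta_+$ lets me take real logarithms and solve uniquely for $(\log z_1,\ldots,\log z_n)$, yielding a unique $z\in\mathbb{R}_+^n$ with $\phi_{w_{n+1}}(z)=y$. This simultaneously gives surjectivity onto $\Delta_+\cap\{G=0\}$ and injectivity, since the linear system in the $\log z_j$ has a unique solution.

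The main obstacle I anticipate is bookkeeping the exact rank and consistency condition: I must show that the single monomial identity coming from \eqref{eq:affinerel} is \emph{equivalent} to, not merely implied by, the consistency of the linear logarithmic system, so that every Gale solution genuinely lifts. Concretely, writing $M$ for the matrix whose columns are $w_1,\ldots,w_{n+1}$, the vector $(\lambda_1,\ldots,\lambda_{n+1})$ spans the left kernel of $M^{\mathsf T}$, and I would argue that the prescribed log-values $(\log P_1(y),\ldots,\log P_n(y),\log y)$ lie in the column space of $M^{\mathsf T}$ if and only if they are annihilated by $(\lambda_i)$, which is exactly the logarithmic form of \eqref{Gale.sys.3}. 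This is a clean linear-algebra fact once the setup is fixed, but it is where care is needed.

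Finally, to upgrade the bijection of solution sets to a bijection of \emph{non-degenerate} solutions, I would note that $\phi_{w_{n+1}}$ and its inverse are given by monomial (respectively exponential-of-linear) maps with everywhere-nonzero Jacobian on the positive orthant, so they are local diffeomorphisms; non-degeneracy of a solution of \eqref{sys:main} is the statement that its Jacobian is invertible, and under a local diffeomorphism this transfers to non-degeneracy of the corresponding root of $G$, i.e. simplicity of the root. Thus $\phi_{w_{n+1}}$ restricts to a bijection between non-degenerate positive solutions and simple roots of $G$ in $\Delta_+$, completing the proof.
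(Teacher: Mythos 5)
The paper does not prove this proposition at all: it is quoted from ~\cite{BS}, and the full argument (including the treatment of non-degeneracy via an isomorphism of solution schemes) is in the Bihan--Sottile papers ~\cite{BS1,BS}. So your proposal has to stand on its own, and its set-theoretic part does: the forward direction via the monomial identity is exactly right, and your inversion is the standard one. Writing $M$ for the matrix with columns $w_1,\ldots,w_{n+1}$, the circuit hypothesis gives $\operatorname{rank} M=n$, hence $\ker M^{\mathsf T}=0$ (uniqueness of the lift) and $\ker M$ is the line spanned by $(\lambda_1,\ldots,\lambda_{n+1})$; consistency of $M^{\mathsf T}\log z=(\log P_1(y),\ldots,\log P_n(y),\log y)$ is then equivalent to orthogonality to that line, which, since all $P_i(y)>0$ on $\Delta_+$, is exactly the logarithmic form of \eqref{Gale.sys.3}. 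That chain of reasoning is correct and complete.

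The genuine gap is your last paragraph, on non-degeneracy --- which is the part of the proposition this paper actually uses. The map $\phi_{w_{n+1}}\colon\mathbb{R}_+^n\to\mathbb{R}_+$ goes from an $n$-dimensional space to a one-dimensional one, so for $n\geq 2$ it is \emph{not} a local diffeomorphism, and its ``inverse'' $y\mapsto z(y)$ parametrizes a curve in $\mathbb{R}_+^n$; invertibility of an $n\times n$ Jacobian cannot be ``transferred'' through such maps by general principles, and this is precisely the point that needs an argument. What is required is an identity relating the two Jacobians. For instance, pass to logarithmic coordinates and replace the equations by $w_i\cdot\log z-\log P_i(z^{w_{n+1}})=0$ (legitimate near a positive solution, as this only multiplies the Jacobian by invertible positive diagonal matrices). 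With $A$ the invertible matrix with rows $w_1^{\mathsf T},\ldots,w_n^{\mathsf T}$, $\theta_i=y_0P_i'(y_0)/P_i(y_0)$, $\theta=(\theta_1,\ldots,\theta_n)^{\mathsf T}$ and $g(y)=\prod_{i=1}^{n+1}P_i(y)^{\lambda_i}$, the Jacobian at the solution over $y_0$ is the rank-one perturbation $A-\theta w_{n+1}^{\mathsf T}$, and using $w_{n+1}^{\mathsf T}=-\lambda_{n+1}^{-1}(\lambda_1,\ldots,\lambda_n)A$ together with $\det(A-\theta w_{n+1}^{\mathsf T})=\det(A)\bigl(1-w_{n+1}^{\mathsf T}A^{-1}\theta\bigr)$ one gets
$$\det\bigl(A-\theta w_{n+1}^{\mathsf T}\bigr)\;=\;\frac{\det A}{\lambda_{n+1}}\Bigl(\lambda_{n+1}+\sum_{i=1}^{n}\lambda_i\theta_i\Bigr)\;=\;\frac{\det A}{\lambda_{n+1}}\,y_0\,g'(y_0),$$
where the last equality uses $g(y_0)=1$ and $\theta_{n+1}=1$. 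Since $\det A\neq 0$, $\lambda_{n+1}\neq 0$ (both from the circuit condition) and $y_0>0$, the solution $z_0$ of \eqref{sys:main} is non-degenerate if and only if $g'(y_0)\neq 0$, i.e.\ $y_0$ is a non-degenerate solution of \eqref{Gale.sys.3}. Without this computation (or the scheme-theoretic argument of ~\cite{BS1}), the restriction of the bijection to non-degenerate solutions remains unproved.
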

\section{Proof of the "only if" direction of Theorem ~\ref{Main:Th.0}}\label{Sec2}

Set $P_{n+2}(y)=1$ and $\lambda_{n+2}= -\sum_{i=1}^{n+1}\lambda_i$. We see $P_{n+2}$ as a polynomial of degree 1 having a root at $\infty$. In what follows, we study the solutions of $\varphi (y)=1$ contained in $\Delta_+$ where 

\begin{equation} \label{eq:Gale.main}
\varphi(y)=\prod_{i=1}^{n+2}P^{\lambda_i}_i(y).
\end{equation} 

 We say that a point $x\in\mathbb{R}\cup\{\infty\}$ is a \textbf{special point} of $\varphi$ if $x$ is either a root or a pole of $\varphi$. Conversely, a \textbf{non-special critical point} $x\in\mathbb{R}$ of $\varphi$ is a root of $\varphi'$ such that $x$ is not a special point of $\varphi$. In what follows, we see $\varphi$ (after homogenization) as a real rational map $\mathbb{C}P^1\rightarrow\mathbb{C}P^1$.

\begin{remark}\label{rem:non-spec}
It is proven in ~\cite{B} (Proof of Proposition 2.1.) that

\begin{equation}\label{derivative}
 \varphi'(y)= y^{\lambda_{n+1}-1}\prod_{i=1}^{n}P_i^{\lambda_i -1}(y)\cdot H(y),
\end{equation} where $\deg H\leq n$. Therefore $\varphi$ has at most $n$ non-special critical points.

\end{remark} Assume that $\Delta_+$ is a non-empty interval. Note that all special points of $\varphi$ are contained in $\mathbb{R}P^1$, and that by definition, the endpoints of $\Delta_+$ are special points of $\varphi$. Choose an orientation of $\mathbb{R}P^1$ and enumerate the special points $x_1,\ldots,x_{n+2}$ of $\varphi$ with respect to this orientation so that $x_{i}<x_{i+1}$ for $i=1,\ldots,n+1$ and the endpoints of $\Delta_+$ are $x_1$ and $x_{n+2}$ (see Figure ~\ref{Delta}). We also renumber the polynomials $P_i$ so that $x_i$ is the root of $P_i$ for $i=1,\ldots,n+2$.

\begin{figure}[H]
\centering
\includegraphics[scale=0.4]{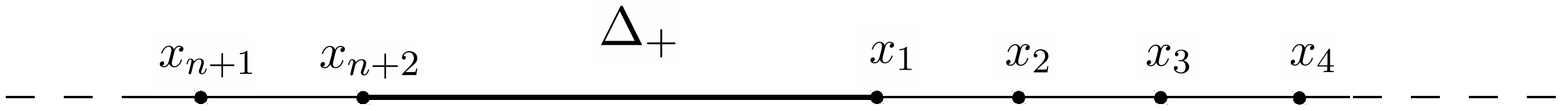}
\caption{\ }
\label{Delta}
\end{figure} The real dessin d'enfant associated to $\varphi$ is $\Gamma:=\varphi^{-1}(\mathbb{R}P^1)$. The notations we use are taken from ~\cite{B} and for a more detailed description of real dessins d'enfant see ~\cite{Br,O} for instance. The dessin d'enfant $\Gamma$ is a graph on $\mathbb{C}P^1$ invariant with respect to the complex conjugation and which contains $\mathbb{R}P^1$. Any connected component of $\mathbb{C}P^1\setminus\Gamma$ is homeomorphic to an open disk. Real critical points of $\varphi$ with real critical value are vertices of $\Gamma$. Moreover, each vertex of $\Gamma$ has even valency, and the multiplicity of a critical point with real critical value of $\varphi$ is half its valency. The graph $\Gamma$ contains the inverse images of $0$, $\infty$ and $1$. The inverse image of $0$ (resp. $\infty$) are the roots of all $P_i$ where $\lambda_i$ are positive (resp. negative). Denote by the same letter $p$ (resp. $q$ and $r$) the points of $\Gamma$ which are mapped to $0$ (resp. $\infty$ and $1$). Orient the real axis on the target space via the arrows $0\rightarrow \infty\rightarrow 1\rightarrow 0$ (orientation given by the decreasing order in $\mathbb{R}$) and pull back this orientation by $\varphi$. The graph $\Gamma$ becomes an oriented graph, with the orientation given by arrows $p\rightarrow q\rightarrow r\rightarrow p$. The graph $\Gamma$ is called {\it real dessin d'enfant} associated to $\varphi$. A \textbf{cycle} of $\Gamma$ is the boundary of a connected component of $\mathbb{C}P^1\backslash\Gamma$. Any such cycle contains the same non-zero number of letters $r$, $p$ , $q$ ordered by $p\rightarrow q\rightarrow r\rightarrow p$ (see Figure ~\ref{fleches}): we say that a cycle obeys the \textbf{cycle rule}. \begin{figure}[H]
\centering
\includegraphics[scale=1.5]{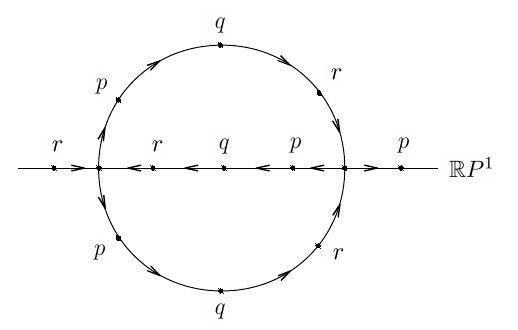}
\caption{Cycles of $\Gamma$ obeying the cycle rule.}
\label{fleches}
\end{figure}

 Since the graph is invariant under complex conjugation, it is determined by its intersection with one connected component $H$ (for half) of $\mathbb{C}P^1\setminus\mathbb{R}P^1$. In most figures we will only show one half part $H\cap\Gamma$ together with $\mathbb{R}P^1=\partial H$ represented as a horizontal line. Moreover, for simplicity, we omit the arrows. Let $a$, $b$ be two critical points of $\varphi$ i.e. vertices of $\Gamma$. We say that $a$ and $b$ are \textbf{neighbors} if there is a branch of $\Gamma\setminus\mathbb{R}P^1$ joining them such that this branch does not contain any special or critical points of $\varphi$ other than $a$ or $b$. In what follows, we assume that $\varphi(y)=1$ has $n+1$ solutions contained in $\Delta_+$. Since the latter interval does not contain special points of $\varphi$, by Rolle's theorem, the function $\varphi$ has at least $n$ non-special critical points in $\Delta_+$, and by Remark ~\ref{rem:non-spec}, the non-special critical points of $\varphi$ (all $n$ of them) are contained in $\Delta_+$.

\begin{lemma}\label{lambda}
We have $\lambda_i\lambda_{i+1}<0$ for $i=1,\ldots,n+1$.
\end{lemma}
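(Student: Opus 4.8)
The plan is to argue one arc at a time. The $n+2$ special points $x_1<\cdots<x_{n+2}$ divide $\mathbb{R}P^1$ into $n+2$ arcs, one of which is $\Delta_+$ (the arc bounded by $x_{n+2}$ and $x_1$ that contains no special point in its interior); the remaining $n+1$ arcs are $A_i:=(x_i,x_{i+1})$ for $i=1,\ldots,n+1$. It suffices to prove that on each $A_i$ the coefficients $\lambda_i$ and $\lambda_{i+1}$ have opposite signs, since this is exactly the claim $\lambda_i\lambda_{i+1}<0$.

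The two ingredients are the local behaviour of $\varphi$ at a special point and the location of the critical points. Since $x_i$ is the unique root of $P_i$, and no factor other than $P_i^{\lambda_i}$ vanishes or blows up at $x_i$, the function $\varphi$ has a zero at $x_i$ when $\lambda_i>0$ and a pole when $\lambda_i<0$; thus $\varphi(y)\to 0$ when $\lambda_i>0$ and $|\varphi(y)|\to\infty$ when $\lambda_i<0$ as $y\to x_i$. Moreover the interior of $A_i$ contains no special point, so $\varphi$ is finite, non-zero and smooth there and hence keeps a constant sign $\sigma\in\{+,-\}$ on $A_i$; in particular the two one-sided limits of $\varphi$ at the endpoints of $A_i$ are $0$ (approached from the side determined by $\sigma$) or $\sigma\cdot\infty$, according to the signs of $\lambda_i$ and $\lambda_{i+1}$.

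Now I use that all critical points lie in $\Delta_+$. Because $\varphi=1$ has $n+1$ solutions in $\Delta_+$, all $n$ non-special critical points of $\varphi$ are contained in $\Delta_+$, and therefore $A_i$ contains no critical point of $\varphi$ whatsoever. Suppose, towards a contradiction, that $\lambda_i$ and $\lambda_{i+1}$ share the same sign. If both are positive, then $\varphi$ tends to $0$ (from the side $\sigma$) at both endpoints of $A_i$; if both are negative, then $\varphi$ tends to $\sigma\cdot\infty$ at both endpoints. In either case $\varphi$ takes the same boundary value at the two ends of $A_i$ while being non-constant, so it attains an interior local extremum in $A_i$. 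That extremum is a point where $\varphi'=0$ and $\varphi$ is finite and non-zero, i.e. a non-special critical point of $\varphi$ lying outside $\Delta_+$ --- contradicting the preceding sentence. Hence $\lambda_i\lambda_{i+1}<0$ for every $i=1,\ldots,n+1$.

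I expect the only genuinely delicate point to be the bookkeeping in the second paragraph: the claim that both endpoint limits are governed by the single sign $\sigma$. This is what excludes the a priori scenario in which $\varphi$ runs across two poles from $+\infty$ to $-\infty$ with no interior extremum, and it is exactly the constancy of $\sigma$ on the zero- and pole-free arc $A_i$ that forbids it. Once this is in place, the contradiction reduces to the elementary observation that a non-constant continuous function on an open interval whose two one-sided limits agree in $\mathbb{R}P^1$ must possess an interior local extremum.
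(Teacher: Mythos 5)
Your proof is correct, and it reaches the conclusion by a genuinely different route than the paper. Both arguments rest on the same preliminary fact: each arc $A_i=(x_i,x_{i+1})$, $i=1,\ldots,n+1$, contains no special points and --- since all $n$ non-special critical points lie in $\Delta_+$ --- no critical points of $\varphi$ either. But the paper closes the argument combinatorially, inside the dessin d'enfant formalism: the arc lies on $\Gamma$, consecutive letters along $\Gamma$ must follow the cyclic pattern $p\to q\to r\to p$, and since the interior of the arc carries no letters $p$ or $q$, its two endpoints must be one root and one pole; this is a one-line consequence of the cycle rule, which the paper has set up anyway because it is needed for all the subsequent lemmas. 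You instead replace the cycle rule by elementary real analysis: constancy of the sign $\sigma$ of $\varphi$ on $A_i$ forces the two endpoint limits to agree \emph{as signed values} (both $0$, or both $\sigma\cdot\infty$), and a non-constant differentiable function with equal signed boundary limits must have an interior extremum, i.e.\ a non-special critical point outside $\Delta_+$, a contradiction. What your version buys is self-containedness --- Lemma \ref{lambda} becomes independent of the dessin machinery --- at the cost of doing the signed-infinity bookkeeping by hand, which you rightly single out as the delicate step. One caveat on your closing sentence: the statement that a non-constant continuous function whose one-sided limits ``agree in $\mathbb{R}P^1$'' has an interior extremum is, taken literally, false (in $\mathbb{R}P^1$ one has $+\infty=-\infty$, and $\tan$ on $(-\pi/2,\pi/2)$ has agreeing limits in that sense but no critical point); what your proof actually establishes and uses is the stronger signed agreement coming from the constancy of $\sigma$, so the body of the argument is sound even though the final summary is phrased too loosely.
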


\begin{proof}
 Consider a couple $x_i$, $x_{i+1}$ of two consecutive special points of $\varphi$ with $i\in\{1,\ldots,n+1\}$. Then these two points are endpoints of an open interval in $\mathbb{R}P^1$ which does not contain special points or non-special critical points. By the cycle rule, this implies that one endpoint is a root (letter $p$) and the other is a pole (letter $q$) of $\varphi$. \end{proof}

We will assume that for $i=1,\ldots,n+2$, we have $\lambda_{i}>0$ if $i$ is odd, and $\lambda_{i}<0$ if $i$ is even.

\begin{lemma}\label{L:c_and_c_no_neighbors}
 The non-special critical points of $\varphi$ cannot be neighbors to each other.
\end{lemma}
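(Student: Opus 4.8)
The plan is to show that any branch witnessing a would-be neighbor relation between two non-special critical points would force incompatible values of $\varphi$ at its two endpoints. First I would pin down the local nature of the non-special critical points. By Remark~\ref{rem:non-spec} there are at most $n$ of them counted with multiplicity, while Rolle's theorem applied to the $n+1$ distinct points of $\Delta_+$ where $\varphi=1$ produces at least $n$ of them, each lying strictly between two consecutive solutions. Hence there are exactly $n$, all simple and all in $\Delta_+$. Moreover, since the $n+1$ solutions of $\varphi(y)=1$ in $\Delta_+$ are non-degenerate, none of them is a critical point, so every non-special critical point $c$ satisfies $\varphi(c)\neq 1$. Lying alone between two consecutive points where $\varphi=1$, such a $c$ is a strict local extremum of $\varphi|_{\mathbb{R}}$: a maximum with $\varphi(c)>1$, or a minimum with $\varphi(c)<1$.

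Next I would record how the unique branch of $\Gamma$ through $c$ that leaves the real axis behaves. Writing $\varphi(y)=\varphi(c)+C(y-c)^2+O((y-c)^3)$ with $C\in\mathbb{R}^*$, the graph $\Gamma=\varphi^{-1}(\mathbb{R}P^1)$ near $c$ is the locus where $(y-c)^2$ is real, i.e. the four half-axes $y-c\in\mathbb{R}\cup i\mathbb{R}$; exactly one of them, $y=c+it$ with $t>0$, enters the chosen half $H$. Comparing signs, on the real directions $\varphi-\varphi(c)$ has the sign of $C$, while on this $H$-branch it has the opposite sign. Thus at a maximum ($C<0$) the $H$-branch carries values $\varphi>\varphi(c)$, whereas at a minimum ($C>0$) it carries values $\varphi<\varphi(c)$.

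Finally I would derive the contradiction. Suppose $a$ and $b$ are non-special critical points that are neighbors; by invariance under complex conjugation I may take the joining branch $\gamma\subset H$, with no special or critical point in its interior. Then $\gamma$ is precisely the $H$-branch at each of $a$ and $b$, and on $\gamma$ the map $\varphi$ is a local homeomorphism onto an arc of $\mathbb{R}P^1$; since $\gamma$ meets no root or pole of $\varphi$, its image avoids $0$ and $\infty$, so $\varphi|_\gamma$ takes values in $(0,\infty)$ and is strictly monotone. Relabelling so that $\varphi$ increases from $a$ to $b$, near $a$ the $H$-branch lies above $\varphi(a)$, so $a$ is a maximum and $\varphi(a)>1$; near $b$ the $H$-branch lies below $\varphi(b)$, so $b$ is a minimum and $\varphi(b)<1$. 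This yields $1<\varphi(a)<\varphi(b)<1$, a contradiction, which proves the lemma.

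The main obstacle, and the step I would be most careful about, is the first one: establishing that every non-special critical point is a \emph{simple} extremum whose critical value lies on the correct side of $1$. The rest is a local computation, but this global input — exactly $n$ simple critical points interlacing the $n+1$ non-degenerate solutions — is what forces maxima above $1$ and minima below $1$, and hence is what makes the monotone-branch argument contradictory.
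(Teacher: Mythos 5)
Your proof is correct, but it takes a genuinely different route from the paper's. The paper disposes of the lemma in three lines using the cycle rule: a complex branch joining two non-special critical points $a,b\in\Delta_+$, together with the real segment $[a,b]\subset\Delta_+$, bounds a disc whose closure contains no roots or poles of $\varphi$ (all special points lie in $\mathbb{R}P^1\setminus\Delta_+$); hence the connected component of $\mathbb{C}P^1\setminus\Gamma$ adjacent to that branch on the side of the real axis has a boundary cycle containing no letters $p$ or $q$, which violates the cycle rule. Your argument replaces this combinatorial step by an analytic one: you first upgrade the count of Remark~\ref{rem:non-spec} to show that the $n$ non-special critical points are simple and interlace the $n+1$ solutions of $\varphi=1$, so that local maxima have value $>1$ and local minima value $<1$; then the local computation $\varphi-\varphi(c)\approx -Ct^{2}$ on the vertical branch, combined with strict monotonicity of $\varphi$ along the joining edge, forces $1<\varphi(a)<\varphi(b)<1$. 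Both arguments are sound. The paper's is shorter, needs neither simplicity of the critical points nor their position relative to the level $1$, and the identical cycle-rule trick is reused verbatim to prove Lemmas~\ref{L:p_and_2c_no_neigh} and~\ref{L:c_and_v_no_neighbors}, so it is the more economical choice in context; yours is self-contained (it never invokes the cycle rule, only the definition $\Gamma=\varphi^{-1}(\mathbb{R}P^1)$) and makes explicit the analytic mechanism --- compatibility of $\varphi$ with orientation along edges of $\Gamma$ --- that the cycle rule encodes, at the price of genuinely needing the global interlacing input, as you correctly flag. One small point to tighten: Remark~\ref{rem:non-spec} states ``at most $n$ non-special critical points'' without mention of multiplicity, while your simplicity claim needs the multiplicity version; this does follow from the displayed factorization of $\varphi'$, since non-special critical points are exactly the non-special roots of $H$ and $\deg H\leq n$, but it should be said.
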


\begin{proof}
%Assume that there exist two non-special critical points $c_1$ and $c_2$ of $\varphi$ in $\Delta_+$ that are neighbors. 

First, note that all special points of $\varphi$ are contained in $\mathbb{R}P^1\setminus\Delta_+$. Consider the branch of $\Gamma$ contained in one of the connected components of $\mathbb{C}P^1\setminus\mathbb{R}P^1$ joining two non-special critical points. Then one of the two connected components of $\mathbb{C}P^1\setminus\Gamma$ adjacent to this edge will have a boundary disobeying the cycle rule.\end{proof}

\begin{lemma}\label{L:p_and_2c_no_neigh}
 A special critical point of $\varphi$ cannot be a neighbor to more than one non-special critical point. 
 \end{lemma}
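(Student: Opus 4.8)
The plan is to argue by contradiction using the cycle rule, in the same spirit as the two preceding lemmas, by producing a region of $\mathbb{C}P^1\setminus\Gamma$ whose boundary is forced to contain a pole that cannot exist. Suppose a special critical point $x$ were a neighbor to two distinct non-special critical points $a$ and $b$. Since the complex conjugate of a neighboring branch is again a neighboring branch, I may assume that both connecting branches $e_a$ and $e_b$ lie in the same half $H$ of $\mathbb{C}P^1\setminus\mathbb{R}P^1$. Replacing $\varphi$ by $1/\varphi$ if necessary (which fixes $\Delta_+$, preserves the set of non-special critical points, and interchanges roots and poles, i.e. the letters $p$ and $q$), I may also assume that $x$ is a root of $\varphi$. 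Since $a$ and $b$ lie in $\Delta_+$ while $x$ does not, I work in an affine chart in which $\Delta_+$ is a bounded interval and label $a,b$ so that the subarc $[a,b]\subset\Delta_+$ is the one cut off by $e_a,e_b$ and not containing $x$.

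First I would pin down the arcs over which the relevant branches lie. As $a\in\Delta_+$ we have $\varphi(a)>0$, and since $e_a$ contains no special point in its interior, $\varphi$ maps $e_a$ onto an arc joining $0=\varphi(x)$ to $\varphi(a)$ with no pole in its interior; hence this arc stays inside $(0,\infty)$ and, near $x$, the germ of $e_a$ lies over $(0,1)$. The same holds for $e_b$. Thus at the root $x$ both germs point into $(0,1)$. Using the local model $\varphi(y)\approx c(y-x)^m$ at the root $x$ of multiplicity $m$, the $2m$ germs of $\Gamma$ at $x$ alternate between those lying over $(0,1)$ (values just above $0$) and those lying over $(-\infty,0)$ (values just below $0$). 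Since two germs over $(0,1)$ cannot be adjacent, there is, strictly between the germs of $e_a$ and $e_b$ and on the side facing $\Delta_+$, at least one germ $f$ lying over $(-\infty,0)$.

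The heart of the argument is then the following. Let $R\subset H$ be the region bounded by $e_a$, $e_b$ and the arc $[a,b]$; the branch $f$ emanates from $x$ into $R$. Following $f$, the value of $\varphi$ stays negative, so the first vertex reached is a critical point of value in $(-\infty,0]$ or $\infty$. But the only critical points of $\varphi$ are its $n$ non-special ones, which all lie in $\Delta_+$ and hence have positive value, together with the roots and poles of $\varphi$; therefore the endpoint of $f$ must be a pole, a point of $\mathbb{R}P^1\setminus\Delta_+$. On the other hand $f$, being a branch disjoint from $e_a,e_b$ whose interior lies in the open half $H$, is contained in $\overline{R}$, so this pole lies in $\overline{R}\cap\mathbb{R}P^1=\{x\}\cup[a,b]$. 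Since $x$ is a root and $[a,b]\subset\Delta_+$ contains no pole, this is impossible, and the lemma follows.

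I expect the main obstacle to be the bookkeeping of the second paragraph, namely verifying over which arcs $e_a,e_b,f$ actually lie and checking that the germ-alternation at $x$ deposits a germ over $(-\infty,0)$ into $R$ rather than into the opposite sector, together with the two reductions (to a root, and to a single half $H$). Once the local picture at $x$ is fixed, the final contradiction is immediate. A point requiring care is that $e_a$ may pass through simple preimages of $1$ lying in $\Gamma\setminus\mathbb{R}P^1$, so that $\varphi(a)$ need not belong to $(0,1)$; what is actually used is only the sign of $\varphi$ near the endpoints of each branch, which is unaffected by such crossings.
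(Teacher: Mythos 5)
Your argument is correct, and it is built on the same configuration as the paper's proof: both place the two connecting branches in one half $H$ of $\mathbb{C}P^1\setminus\mathbb{R}P^1$ and derive a contradiction from the region $R$ cut off by these branches together with the real arc $[a,b]\subset\Delta_+$. The difference lies in how the contradiction is extracted. The paper invokes the cycle rule directly: a connected component of $\mathbb{C}P^1\setminus\Gamma$ adjacent to the two branches has a boundary whose only special point is the special critical point itself, so it cannot carry the letters $p$, $q$, $r$ that every cycle must contain. You instead open up that black box and re-derive the relevant instance of the rule: the sign alternation of the germs of $\Gamma$ at the root $x$ pushes a branch $f$ carrying negative values of $\varphi$ into $R$, and the pole that $f$ must reach has nowhere to sit, since $\overline{R}\cap\mathbb{R}P^1=\{x\}\cup[a,b]$ contains no pole. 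This is more self-contained (you never need the cycle rule as a stated fact) but costs you the two reductions and the germ bookkeeping, whereas the paper's proof is two sentences precisely because the cycle rule is already on the table. One step of yours needs a one-line patch: from ``$\varphi$ stays negative along $f$'' plus your classification of critical points you conclude that $f$ ends at a pole, but this reasoning does not exclude the terminal value $0$, i.e.\ $f$ terminating at a root --- which, since $[a,b]$ contains no special points, would mean $f$ loops back to $x$ itself. Rule this out by noting that $\varphi$ is strictly monotone along $f$ (the interior of $f$ contains no critical point of $\varphi$), so the value, once strictly negative, cannot return to $0$ without first passing through $\infty$, which would create a pole in the interior of $f$; hence the terminal value is indeed $\infty$ and the endpoint is a pole, as you claim.
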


\begin{proof}
 Assume that there exists a special critical point $\alpha$ of $\varphi$ that is a neighbor to at least two non-special critical points of $\varphi$ (in $\mathbb{R}P^1$). Let $c_1$ and $c_2$ be two such consecutive non-special critical points. Consider two branches of $\Gamma$ contained in one of the connected components of $\mathbb{C}P^1\setminus\mathbb{R}P^1$ joining $\alpha$ to $c_1$ and $\alpha$ to $c_2$ respectively. Then one of the two connected components of $\mathbb{C}P^1\setminus\Gamma$ adjacent to these two branches will have a boundary containing only $\alpha$ as a special point, and thus disobeying the cycle rule.
\end{proof}

\begin{lemma}\label{L:c_and_v_no_neighbors}
The special points $x_1$ and $x_{n+2}$ of $\varphi$ are not neighbors to any of the non-special critical points.
\end{lemma}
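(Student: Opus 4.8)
The plan is to argue by contradiction and, as in the previous lemmas, to produce a branch of $\Gamma$ trapped inside a region of $\mathbb{C}P^1\setminus\Gamma$ whose only special point is of the wrong type, so that the branch can never terminate. By the symmetry between the two ends of $\Delta_+$ it is enough to treat $x_1$. I will use that, since $\varphi(y)=1$ has $n+1$ non-degenerate roots in $\Delta_+$ while $\varphi$ has at most $n$ non-special critical points (Remark~\ref{rem:non-spec}), each gap between consecutive roots carries exactly one non-special critical point, which is therefore a simple local extremum, i.e.\ a valency-$4$ vertex of $\Gamma$. Hence along $\Delta_+$ the point $x_1$ is followed by $s_1<c_1<s_2<c_2<\cdots$, the $s_j$ being the roots of $\varphi(y)=1$ and the $c_k$ the non-special critical points, with $c_k$ a local maximum for odd $k$ and a local minimum for even $k$; in particular $c_1$ is a local maximum. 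Since $\lambda_1>0$, the endpoint $x_1$ is a root of $\varphi$, a letter $p$. I also record that every zero, pole and non-special critical point of $\varphi$ is real: the $P_i$ are linear, and $\deg H=n$ forces all $n$ non-special critical points to be the real ones already located in $\Delta_+$.

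First I will determine where the unique edge issued by a non-special critical point $c$ into a fixed half $H$ of $\mathbb{C}P^1\setminus\mathbb{R}P^1$ terminates. Along any edge of $\Gamma$ the map $\varphi$ is a homeomorphism onto one of the arcs of $\mathbb{R}P^1\setminus\{0,1,\infty\}$. If $c$ is a local maximum this edge carries values in $(1,\infty)$, along which $\varphi$ increases; by Lemma~\ref{L:c_and_c_no_neighbors} it cannot end at another non-special critical point, it cannot attain the value $1$, and since all poles are real it must terminate at a pole on $\mathbb{R}P^1$. Symmetrically, the edge issued by a local minimum terminates at a root.

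Now suppose $x_1$ were a neighbor of a non-special critical point $c$, with connecting branch $\beta\subset H$. As $\beta$ joins $c$ to the root $x_1$ and has no special or critical point in its interior, $\varphi$ decreases along it from $\varphi(c)$ to $0$ with values in $(0,1)$; thus $c$ is a local minimum, say $c=c_k$ with $k\ge 2$, and $c_1$ lies strictly between $x_1$ and $c$ on $\Delta_+$. The branch $\beta$ together with the subarc of $\overline{\Delta_+}$ from $x_1$ to $c$ bounds a disk $R\subset\overline H$. The edge issued by $c_1$ into $H$ starts at the interior point $c_1$ of this subarc, so it enters $R$; it cannot cross $\beta$, and it cannot meet $\beta$ at its only vertices $c,x_1$ since the values there lie in $(0,1)$ while the edge carries values $>1$. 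Thus it stays in $R$, yet the only special point on the subarc from $x_1$ to $c$ is the root $x_1$, so $R$ contains no pole for it to land on, contradicting the previous step.

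Finally, running the same argument with the last critical point $c_n$ in place of $c_1$ — a local maximum landing on a pole when $n$ is odd, a local minimum landing on a root when $n$ is even, in either case of type opposite to the endpoint $x_{n+2}$ — rules out $x_{n+2}$ as well. I expect the delicate point to be the intermediate step locating the terminus of the edge from an extremum: one must be sure it does not get caught at an intermediate vertex, which is exactly where the reality of all zeros, poles and critical points, together with Lemma~\ref{L:c_and_c_no_neighbors}, is used.
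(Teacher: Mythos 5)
Your proof is correct, but it takes a genuinely different route from the paper's. The paper's argument is a three-line application of the cycle rule: if $x_1$ were a neighbor of a non-special critical point $c$, the branch joining them together with the arc of $\overline{\Delta_+}$ between them cuts out a disk whose only special point is $x_1$; the face of $\mathbb{C}P^1\setminus\Gamma$ adjacent to that branch on the $\Delta_+$ side then has a boundary cycle containing no letter $q$ at all, which violates the cycle rule. You never apply the cycle rule to a face; you work at the level of edges instead: monotonicity of $\varphi$ along edges, the reality of all zeros, poles and non-special critical points (via $\deg H=n$), and Lemma~\ref{L:c_and_c_no_neighbors} pin down the terminus of the unique edge that a simple extremum sends into $H$ (a pole for a local maximum, a root for a local minimum); the interlacing of the $n+1$ solutions of $\varphi=1$ with the $n$ simple extrema then forces $c$ to be of type opposite to the endpoint, so the nearest extremum $c_1$ is trapped in the disk $R$, where there is no pole for its edge to land on. Both proofs exploit the same obstruction --- the absence of poles on the $\Delta_+$ side --- but yours extracts more structure along the way (extremum types, termini of complex edges), anticipating Proposition~\ref{prop:Main1}, at the price of several auxiliary facts the paper's argument never needs. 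One sentence you should reorder: ``$\varphi$ decreases along $\beta$ \dots\ with values in $(0,1)$; thus $c$ is a local minimum'' argues in the wrong direction, since the definition of neighbors allows letters $r$ in the interior of $\beta$, so the containment of values in $(0,1)$ is not automatic; instead, deduce from your edge-terminus step that a local maximum's unique edge into $H$ ends at a pole, hence cannot begin a branch reaching the root $x_1$, and conclude from this that $c$ is a local minimum.
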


\begin{proof}
Assume that $x_1$ is a neighbor to a non-special critical point $c$ (the case where $x_{n+2}$ is a neighbor to $c$ is symmetric). Recall that $\Delta_+$ does not contain special points of $\varphi$. Consider the branch of $\Gamma$ contained in one of the connected components of $\mathbb{C}P^1\setminus\mathbb{R}P^1$ joining $x_1$ to $c$. Then one of the two connected components of $\mathbb{C}P^1\setminus\Gamma$ adjacent to this branch will have a boundary containing only $x_1$ as a special point, and thus disobeying the cycle rule.\end{proof}

 Recall that $\varphi$ has $n$ non-special critical points  all contained in $\Delta_+$. Let $c_2,\ldots,c_{n+1}$ denote these points numbered so that $x_{n+2}<c_{n+1}<c_n<\cdots <c_2<x_1$.
 
\begin{proposition} \label{prop:Main1}
For $i=2,\ldots,n+1$, the special point $x_i$ is a neighbor to $c_i$ (see Figure ~\ref{Fig1}).
\end{proposition}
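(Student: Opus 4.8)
The plan is to work inside one half $H$ of $\mathbb{C}P^1\setminus\mathbb{R}P^1$ and set up a matching between the non-special critical points $c_2,\ldots,c_{n+1}$ and the special points $x_2,\ldots,x_{n+1}$. First I would record that, since the $n$ non-special critical points are precisely the roots of the polynomial $H$ of Remark~\ref{rem:non-spec} and $\deg H\le n$, these roots are simple; hence each $c_i$ is a critical point of multiplicity $2$, i.e.\ a vertex of $\Gamma$ of valency $4$. Two of its four half-edges lie along $\mathbb{R}P^1$, so by conjugation symmetry exactly one branch of $\Gamma$ leaves $c_i$ into $H$. I would also note that \emph{all} critical points of $\varphi$ are real: in the factorization of Remark~\ref{rem:non-spec} the zeros of $\varphi'$ are roots of $H$ (the $c_i$) or of the $P_i$ and of $y$ (special points), all real. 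Thus $\Gamma$ has no vertex in the interior of $H$, so the branch leaving $c_i$ is an arc that returns to $\mathbb{R}P^1$ at some vertex $x$, possibly passing only through valency-$2$ preimages of $1$; in particular $x$ is a neighbor of $c_i$.

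Next I would identify $x$. As the arc lands on $\mathbb{R}P^1$, together with its conjugate arc and the two real half-edges the point $x$ has valency at least $4$, so it is a critical point. By Lemma~\ref{L:c_and_c_no_neighbors} it is not another non-special critical point, hence $x$ is special; by Lemma~\ref{L:c_and_v_no_neighbors} it is neither $x_1$ nor $x_{n+2}$. So each $c_i$ determines a neighbor $x=x_{j(i)}$ with $j(i)\in\{2,\ldots,n+1\}$. By Lemma~\ref{L:p_and_2c_no_neigh} no special point is a neighbor of two distinct $c_i$, so $i\mapsto j(i)$ is injective; being an injection of the $n$-element set $\{2,\ldots,n+1\}$ into itself it is a bijection. (In passing, this forces every $x_j$ with $2\le j\le n+1$ to receive an arc, hence to be genuinely critical.)

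It remains to prove $j(i)=i$, and this planarity step is where I expect the real work to lie. The $n$ arcs $c_i x_{j(i)}$ are pairwise disjoint branches of $\Gamma$ in the disk $H$ with endpoints on the boundary circle $\mathbb{R}P^1$, so their chords are non-crossing. Along $\mathbb{R}P^1$ the vertices appear in the cyclic order $x_1,x_2,\ldots,x_{n+1},x_{n+2},c_{n+1},c_n,\ldots,c_2$, so the $x_j$ and the $c_i$ form two consecutive blocks separated by the unmatched points $x_1$ and $x_{n+2}$. I would then argue that the only non-crossing perfect matching between two such blocks is the nested one: the chord from $c_2$, which is cyclically adjacent to $x_1$, must join $x_2$, since if it joined $x_{j(2)}$ with $j(2)>2$ then $x_2,\ldots,x_{j(2)-1}$ would be cut off by this chord on an arc containing no $c_i$, and could not be matched without a crossing; deleting the pair $\{c_2,x_2\}$ and inducting gives $j(i)=i$ for all $i$. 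Hence $x_i$ is a neighbor of $c_i$, as claimed. The two delicate points to secure are the absence of interior vertices of $\Gamma$ in $H$ (so each branch really terminates at a special point) and the non-crossing bookkeeping that pins the matching down to the identity.
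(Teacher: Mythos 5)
Your proof is correct and takes essentially the same route as the paper: Lemmas~\ref{L:c_and_c_no_neighbors}, \ref{L:p_and_2c_no_neigh} and \ref{L:c_and_v_no_neighbors} give an injective, hence bijective, assignment of each $c_i$ to some $x_{j(i)}$ with $j(i)\in\{2,\ldots,n+1\}$, and a planarity (non-crossing) argument forces $j(i)=i$. The paper phrases this last step as a counting argument inside the interval bounded by $x_i$ and its partner and containing $x_1$, rather than your induction from $c_2$, and it leaves implicit the preliminary facts you spell out (simplicity of the roots of $H$, valency $4$ of the $c_i$, absence of vertices of $\Gamma$ in the interior of $H$), but the substance is the same.
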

 
\begin{proof}
 First, by Lemma ~\ref{L:c_and_v_no_neighbors}, we have that the roots of $P_1$ and $P_{n+2}$ are not neighbors to non-special critical points. Recall that there exists $n$ non-special critical points in $\Delta_+$. Therefore, by Lemmata ~\ref{L:c_and_c_no_neighbors} and ~\ref{L:p_and_2c_no_neigh}, we have that for $i = 2,\cdots, n + 1$, the special point $x_i$ is a neighbor to only one non-special critical point $c_j$. Consider the closed interval $I\subset\mathbb{R}P^1$ with endpoints $x_i$ and $c_j$ and which contains $x_1$. The special points in $I$ are $x_1,x_2,\ldots,x_i$ and the non-special critical points in $I$ are $c_2,\ldots,c_j$. Then the non-special critical points in $I$ can only be neighbors to special points in $I\setminus\{x_1\}$ (see Lemma ~\ref{L:c_and_v_no_neighbors}). This induces a bijection between $\{x_2,\ldots,x_i\}$ and $\{c_2,\ldots,c_j\}$, thus $i=j$.

\begin{figure}[H]
\centering
\includegraphics[scale=1.5]{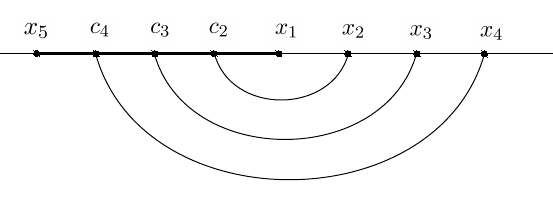}
\caption{\ }
\label{Fig1}
\end{figure}\end{proof}

\begin{lemma}\label{L:key:x_1x_(n+2)}

The special point $x_1$ (resp. $x_{n+2}$) of $\varphi$ can only be a neighbor to the special point $x_2$ (resp. $x_{n+1}$) of $\varphi$.

\end{lemma}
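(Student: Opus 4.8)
The plan is to argue by contradiction, exploiting the planarity of $\Gamma$ together with the neighbor structure already established in Proposition~\ref{prop:Main1}. I will treat only $x_1$, since the assertion for $x_{n+2}$ follows by the symmetric argument (reversing the orientation of $\mathbb{R}P^1$). By Lemma~\ref{L:c_and_v_no_neighbors}, $x_1$ is not a neighbor of any non-special critical point, so every neighbor of $x_1$ is a special point; it therefore suffices to rule out $x_1$ being a neighbor of $x_k$ for some $k\geq 3$.

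Suppose then that $x_1$ and $x_k$ are neighbors with $k\geq 3$, and let $\gamma$ be a branch of $\Gamma\setminus\mathbb{R}P^1$ realizing this: it lies in one half $H$ of $\mathbb{C}P^1\setminus\mathbb{R}P^1$ and contains no special or critical point other than its endpoints $x_1,x_k$. Since $H$ is an open disk with boundary $\mathbb{R}P^1$ and $\gamma$ is an embedded arc with both endpoints on this boundary circle, $\gamma$ separates $H$ into two half-disks. Let $D$ be the one whose boundary contains the arc of $\mathbb{R}P^1$ running from $x_1$ to $x_k$ through $x_2,\ldots,x_{k-1}$; in particular $x_2\in\partial D$. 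On the other hand, in the cyclic order of special and non-special critical points fixed before Proposition~\ref{prop:Main1}, the point $c_2$ lies on the arc $\Delta_+$ joining $x_{n+2}$ to $x_1$, hence on the boundary of the complementary half-disk $D'$ and not on $\partial D$.

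Now recall that by Proposition~\ref{prop:Main1} the points $x_2$ and $c_2$ are neighbors, and since $\Gamma$ is invariant under complex conjugation the branch realizing this relation may be taken to lie in the same half $H$ as $\gamma$; call it $\delta$. Then $\delta$ is a connected arc in $H$ joining $x_2\in\partial D$ to $c_2\in\partial D'$, so it must cross $\gamma$, which separates $D$ from $D'$ inside $H$. But two branches of the embedded graph $\Gamma$ can meet only at a common vertex, while $\gamma$ (with endpoints $x_1,x_k$) and $\delta$ (with endpoints $x_2,c_2$) share no vertex because $k\geq 3$. This contradiction shows that $x_1$ cannot be a neighbor of $x_k$ for any $k\geq 3$, so its only possible special neighbor is $x_2$.

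The step I expect to require the most care is the separation argument: one must verify that $\gamma$ genuinely disconnects $H$ into the two regions $D$ and $D'$, that $c_2$ sits strictly on the far arc $\Delta_+$ rather than on the arc from $x_1$ to $x_k$, and that invoking conjugation symmetry legitimately places both $\gamma$ and $\delta$ in the same half $H$. All three points follow from the planarity of $\Gamma$ and the cyclic order of the special and critical points, but they are exactly where a careless reading could hide a crossing that does not in fact occur.
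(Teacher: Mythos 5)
Your proof is correct and is essentially the paper's argument in contrapositive form: the paper traps every branch emanating from $x_1$ inside the disc bounded by the $(x_2,c_2)$-branch from Proposition~\ref{prop:Main1} and the real interval through $x_1$, whereas you assume a branch from $x_1$ to $x_k$ ($k\geq 3$) and show it would have to cross that same $(x_2,c_2)$-branch. Both rest on the identical ingredients --- Proposition~\ref{prop:Main1} applied to the pair $(x_2,c_2)$, Lemma~\ref{L:c_and_v_no_neighbors}, and the fact that disjoint embedded branches of $\Gamma$ with interleaved endpoints on $\mathbb{R}P^1$ cannot avoid crossing --- so there is nothing substantively different to flag.
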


\begin{proof}
We prove the result only for $x_1$ since the case for $x_{n+2}$ is symmetric. Consider the open interval $I$ with endpoints $c_2$ and $x_2$ containing $x_1$. By Proposition ~\ref{prop:Main1}, we have that $c_2$ and $x_2$ are neighbors. The result comes as a consequence of Lemma ~\ref{L:c_and_v_no_neighbors} and of the fact that there does not exist special points or non-special critical points in $I$ other than $x_1$ (See Figure ~\ref{Fig3}).
\end{proof}

\begin{lemma}\label{L:key:x_i}
For $i=1,\ldots, n$, the only special points which can be neighbors to $x_{i+1}$ are $x_i$ and $x_{i+2}$.

\end{lemma}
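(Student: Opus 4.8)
The plan is to argue by contradiction, using only the planarity of the graph $\Gamma$ together with Proposition~\ref{prop:Main1}; the cycle rule is not needed here. Suppose that for some $i\in\{1,\ldots,n\}$ the special point $x_{i+1}$ is a neighbor to a special point $x_k$ with $|k-(i+1)|\geq 2$. Writing $j=i+1$, so that $2\leq j\leq n+1$, this means $x_j$ is a neighbor to some $x_k$ with $k\geq j+2$ or $k\leq j-2$; by the invariance of $\Gamma$ under complex conjugation we may assume the branch $\gamma$ realizing this neighboring relation lies in the half $H$. The two cases are symmetric, so I treat $k\geq j+2$.

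First I would set up the separation produced by $\gamma$. Since $\gamma$ is a simple arc whose interior lies in the open disk $H$ and whose endpoints $x_j,x_k$ lie on $\partial H=\mathbb{R}P^1$, it splits the closed disk $\overline{H}$ into two regions $\Omega$ and $\Omega'$ whose boundaries meet $\mathbb{R}P^1$ along the two complementary arcs $A$ and $A'$ cut out by $x_j$ and $x_k$. I label them so that $A$ is the arc carrying the special points $x_{j+1},\ldots,x_{k-1}$; since the indices increase monotonically from $j$ to $k$, this arc avoids $\Delta_+$, which is the complementary arc joining $x_{n+2}$ to $x_1$. Consequently $\Delta_+\subset A'$.

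The crux is the crossing contradiction. Because $k\geq j+2$ there is at least one intermediate special point; pick any $x_m$ with $j<m<k$, so that $m\in\{2,\ldots,n+1\}$. By Proposition~\ref{prop:Main1} the point $x_m$ is a neighbor to the non-special critical point $c_m$, and $c_m\in\Delta_+\subset A'$; again by conjugation symmetry I may take the branch $\delta$ realizing this in $H$. Now $x_m$ lies in the interior of $A\subset\partial\Omega$, while $c_m$ lies in the interior of $A'\subset\partial\Omega'$, so any path whose interior lies in $H$ and which joins $x_m$ to $c_m$ must pass from the interior of $\Omega$ to the interior of $\Omega'$ and hence must meet $\gamma$. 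But $\gamma$ and $\delta$ are branches of the embedded graph $\Gamma$ that contain no critical points in their interiors, and their endpoint sets $\{x_j,x_k\}$ and $\{x_m,c_m\}$ are disjoint; at a regular point of $\varphi$ the set $\varphi^{-1}(\mathbb{R}P^1)$ is locally a single arc, so two branches of $\Gamma$ can meet only at a common critical point, which is impossible here. This contradiction proves that no such $x_k$ exists, and the case $k\leq j-2$ follows identically using an intermediate point $x_m$ with $k<m<j$.

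The main obstacle I anticipate is making the topological separation airtight rather than the combinatorics. One must verify that $\gamma$ and $\delta$ really are single arcs of $\Gamma$ with no interior vertices, that each can be chosen in the same half $H$, and above all that the only way two branches of a real dessin can cross is at a critical point of $\varphi$ — this is precisely what lets the disjointness $\{x_j,x_k\}\cap\{x_m,c_m\}=\emptyset$ rule out any intersection of $\gamma$ and $\delta$. Once these points are secured, Proposition~\ref{prop:Main1} and planarity do all the work.
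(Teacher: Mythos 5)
Your proof is correct, and it rests on exactly the same two pillars as the paper's: Proposition~\ref{prop:Main1} and the planarity of $\Gamma$ (branches of a real dessin can only meet at critical points of $\varphi$). But the Jordan-curve configuration is dual to the paper's. The paper argues directly: for $2\leq i\leq n-1$ it builds a confining disc $\mathcal{C}$ bounded by the two ``good'' branches $x_i\to c_i$ and $x_{i+2}\to c_{i+2}$ supplied by Proposition~\ref{prop:Main1}, together with the real segments $[x_i,x_{i+2}]$ and $[c_{i+2},c_i]$, and observes that any branch leaving $x_{i+1}$ is trapped in $\mathcal{C}$, where the only special points are $x_i$ and $x_{i+2}$; since $c_1$ and $c_{n+2}$ do not exist, the boundary cases $i=1$ and $i=n$ must be treated separately (there the paper instead uses the pairs $(x_2,c_2),(x_3,c_3)$, resp.\ their mirror images). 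You argue by contradiction: the hypothetical long branch $\gamma$ from $x_{i+1}$ to a far special point is itself taken as the separating curve, and a single intermediate pair $(x_m,c_m)$ from Proposition~\ref{prop:Main1} is what gets separated, forcing a forbidden crossing. What your organization buys is uniformity --- every intermediate index $m$ automatically lies in $\{2,\ldots,n+1\}$, so no case split between boundary and interior values of $i$ is needed --- and it makes explicit the non-crossing principle (two branches with disjoint endpoint sets and no interior critical points cannot meet) that the paper uses only implicitly. What the paper's version buys is a positive description of where the neighbors of $x_{i+1}$ must live, which it then reuses immediately afterwards to compute the valencies $V_i=N_{i-1,i}+N_{i,i+1}+4$ and to give the complete description of $\Gamma$; your contradiction argument establishes the lemma but does not by itself exhibit the confining disc used in that later bookkeeping.
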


\begin{proof}
 Assume first that $i=1$ (the case $i=n$ is symmetric). Recall that by Proposition ~\ref{prop:Main1}, the special point $x_2$ (resp. $x_3$) and $c_2$ (resp. $c_3$) are neighbors. Therefore, the only other possible neighbors to $x_2$ are $x_1$ and $x_3$ (see Figure ~\ref{Fig3}).\\

\begin{figure}[H]
\centering
\includegraphics[scale=1.5]{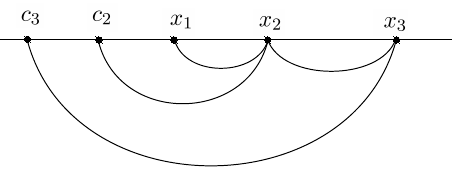}
\caption{\ }
\label{Fig3}
\end{figure}

 Assume now that $i\neq 1$ and $i\neq n$. Recall that by Proposition ~\ref{prop:Main1} the point $x_{i}$ (resp. $x_{i+2}$) is a neighbor to $c_i$ (resp. $c_{i+2}$). Consider the disc $\mathcal{C}$ in $\mathbb{C}P^1$ with boundary given by the union of $[c_{i+2},c_{i}]$, $[x_i,x_{i+2}]$ and the complex arcs of $\Gamma$ joining $c_i$ to $x_i$ (resp. $c_{i+2}$ to $x_{i+2}$), and which are contained in one given connected component of $\mathbb{C}P^1\setminus\mathbb{R}P^1$ (see Figure ~\ref{Fig4}). The result follows from the fact that the only special points in $\mathcal{C}$ are $x_i$ and $x_{i+2}$.

\begin{figure}[H]
\centering
\includegraphics[scale=1.5]{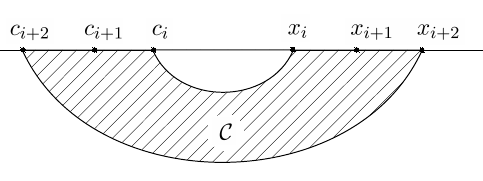}
\caption{\ }
\label{Fig4}
\end{figure}\end{proof}

Recall that $\lambda_i$ is positive if $i$ is odd and negative if $i$ is even, and thus the root $x_i$ of $P_i$ is a zero (resp. pole) of $\varphi$ if $i$ is odd (resp. even). Recall that the valency of any special point $x_i$ is the number $V_i$ of edges of $\Gamma$ that are incident to $x_i$.

For $i=1,\ldots,n+1$, denote by $N_{i,i+1}$ the number of edges of $\Gamma$ in $\mathbb{C}P^1\setminus\mathbb{R}P^1$ joining the special points $x_i$ and $x_{i+1}$. By Lemmata ~\ref{L:c_and_v_no_neighbors} and ~\ref{L:key:x_1x_(n+2)}, we have $V_1=N_{1,2}+2$ and $V_{n+2}=N_{n+1,n+2}+2$ (each number $2$ corresponds to the pair of edges of $\Gamma$ in $\mathbb{R}P^1$ incident to $x_1$ and $x_{n+2}$ respectively). Moreover, for $i=2,\ldots,n+1$, Proposition ~\ref{prop:Main1} and Lemma ~\ref{L:key:x_i} show that $V_i = N_{i-1,i} +N_{i,i+1} +4$, where the number $4$ counts the branches in $\mathbb{R}P^1$ together with the branches joining $x_i$ to $c_i$. Knowing that $V_i=|2\lambda_i|$, it is straightforward to compute that for $k=1,\ldots,\lfloor n/2\rfloor +1$, we have

\begin{equation}\label{ifneven1}
  \sum_{j=1}^k\lambda_{2j-1} < -\sum_{j=1}^k\lambda_{2j}<\sum_{j=0}^k\lambda_{2j+1} \text{\quad if $n$ is even, or}
\end{equation}
\begin{equation}\label{ifnodd}
\sum_{j=1}^k\lambda_{2j-1} < -\sum_{j=1}^k\lambda_{2j}<\sum_{j=0}^k\lambda_{2j+1} \text{\quad if $n$ is odd.}
\end{equation} This finishes the proof of the "only if part" of Theorem ~\ref{Main:Th.0}.

 We now finish the description of $\Gamma$. For $i\in\{0,\ldots,n+1\}$, consider the real branch $L_0$ joining two consecutive special points $x_i$ and $x_{i+1}$ of $\varphi$. Let $k:=N_{i,i+1}/2$, and for $j=1,\ldots, k$, consider the couple of conjugate branches $(L_j,\overline{L}_j)$ joining $x_i$ to $x_{i+1}$ enumerated such that the open disc of $\mathbb{C}P^1$ with boundary $(L_j,\overline{L}_j)$ and containing $L_0$, contains the couple $(L_{j-1},\overline{L}_{j-1})$ as well (assuming that $L_0\equiv\overline{L}_0$). The branch $L_k$ (resp. $\overline{L}_k$) does not contain a letter $r$ since there exists a cycle of $\Gamma_1$ containing both $L_k$ (resp. $\overline{L}_k$) and a letter $r\in\Delta_+$, and thus obeying the cycle rule. On the other hand, the branch $L_{k -1}$ (resp. $\overline{L}_{k-1}$) contains a letter $r$ where the cycle formed by the union of $L_{k}$ and $L_{k-1}$ (resp. $\overline{L}_{k}$ and $\overline{L}_{k-1}$) and containing $x_i$ and $x_{i+1}$ obeys the cycle rule. We deduce that for $j=0,\ldots,k$, the branch $L_j$ (resp. $\overline{L}_j$) has exactly 1 or 0 letters $r$ according as $j$ and $k-1$ have the same parity or not (see Example ~\ref{Ex:2}).

In fact, this complete description of the dessin d'enfant $\Gamma$ can be used to prove the "if" part of Theorem ~\ref{Main:Th.0} with the same techniques as in ~\cite{B}. However, we choose in Section ~\ref{Sec3} a different method, namely Viro's combinatorial patchworking, which shows clearly why the inequalities of Theorem ~\ref{Main:Th.0} are necessary. 
\begin{remark}
 From the relations described above, we see that the collection of integers $N_{i,i+1}$ is determined by the collection of the coefficients $\lambda_i$ (and vice-versa). Moreover, we see that the inequalities of Theorem ~\ref{Main:Th.0} are equivalent to $N_{i,i+1}\geq 0$ for $i=1,\ldots,n+1$.
\end{remark}

\begin{figure}[H]
\centering
\includegraphics[scale=1.5]{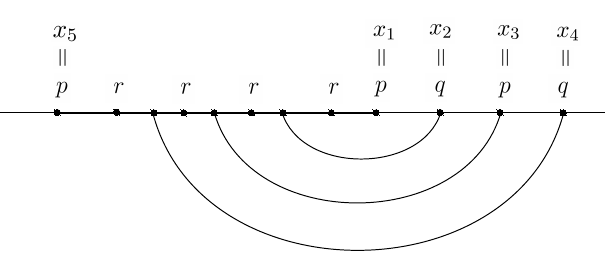}
\caption{\ }
\label{Fig:Gamma3}
\end{figure} 

\begin{example}\label{Ex:2}
Figure ~\ref{Fig:Gamma3+} represents an example of $\Gamma$ where $n=3$, $\lambda_1=3$, $\lambda_2=-7$, $\lambda_3=6$, $\lambda_4=-3$ and $\lambda_5=1$. The dessin d'enfant $\Gamma$ can be obtained from $\Gamma_0$ (see Figure ~\ref{Fig:Gamma3}) by adding complex branches connecting consecutive special points and letters $r$ as described above. 

\end{example}

\begin{figure}[H]
\centering
\includegraphics[scale=1.5]{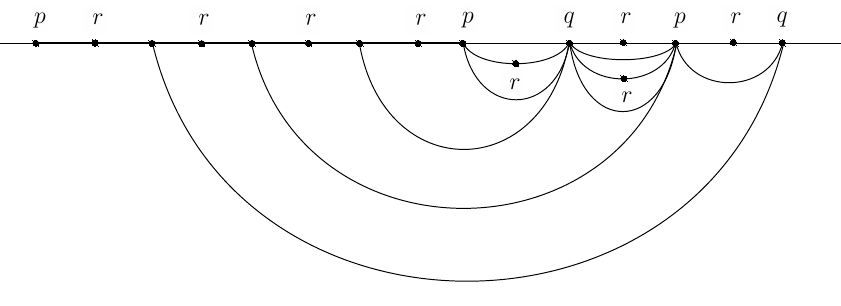}
\caption{\ }
\label{Fig:Gamma3+}
\end{figure} 

\section{Proof of the "if" direction of Theorem ~\ref{Main:Th.0}}\label{Sec3}

Assume that $\lambda_i>0$ if $i$ is odd, $\lambda_i<0$ if $i$ is even and ~\eqref{ifneven1} or ~\eqref{ifnodd} is satisfied (depending on the parity of $n$). In this section, we construct polynomials $P_i$ (see Section ~\ref{Sec2}) such that ~\eqref{eq:Gale.main} has $n+1$ solutions in $\Delta_+$. These polynomials have the form $P_{1,t}(y)=t^{\alpha_1}y$, $P_{n+2,t}(y)=1$ and $P_{i,t}(y)=1 + t^{\alpha_i}y$ for $i=2,\ldots, n+1$, where $t$ is a real positive parameter that will be taken small enough, and each $\alpha_i$ is a real number. The corresponding Gale polynomial ~\eqref{Gale4} is 

\begin{equation}\label{eq:Gale2}
\displaystyle G_t(y):=\prod_{j=0}^{\lfloor n/2\rfloor} P_{2j+1,t}^{\lambda_{2j+1}}(y) - \prod_{j=1}^{\lfloor (n+1)/2\rfloor} P_{2j,t}^{-\lambda_{2j}}(y).
\end{equation}

\noindent We are interested in the roots of $G_t$ contained in $\Delta_{+,t}$, which is the common positivity domain of the polynomials $P_{i,t}$. Note that here $\Delta_{+,t}=]0,+\infty[$. The polynomial $G_t$ is a particular case of a Viro polynomial (c.f. ~\cite{BBS,Ol1,Ol2}) $$f_t(y)=\sum_{p=p_0}^d\phi_p(t)y^p,$$ where $t$ is a positive real number, and each coefficient $\phi_p(t)$ is a finite sum $\sum_{q\in I_p} c_{p,q} t^q$ with $c_{p,q}\in\mathbb{R}$ and $q$ a real number.

We now recall how one can recover in some cases the real roots of $f_t$ for $t$ small enough (see for instance ~\cite{BBS}). Write $f$ for the function of $y$ and $t$ defined by $f_t$. Let $D\subset\mathbb{R}^2$ be the convex hull of the points $(p,q)$ for $p_0\leq p\leq d$ and $q\in I_p$. Assume that $D$ has dimension 2. Its lower hull $L$ is the union of the edges $e_1,\ldots,e_l$ of $D$ whose inner normals have positive second coordinate. Let $I_i$ be the image of $e_i$ under the projection $\mathbb{R}^2\rightarrow\mathbb{R}$ forgetting the last coordinate. Then the intervals $I_1,\ldots,I_l$ subdivide the Newton segment $[p_0, d]$ of $f_t$. Let $f^{(i)}$ be the facial subpolynomial of $f$ for the face $e_i$. That is, the polynomial $f^{(i)}$ is the sum of terms $c_{p,q}y^p$ such that $(p,q)\in e_i$. Suppose that $e_i$ is the graph of $y\mapsto a_i y+b_i$ over $I_i$. Expanding $f_t(yt^{- a_i})/t^{b_i}$ in powers of $t$ gives 

\begin{equation}\label{eq:Vir}
f_t(yt^{-a_i})/t^{b_i} = f^{(i)}(y) + g^{(i)}(y,t)\text{\quad and\quad} i = 1,\ldots,l,
\end{equation} where $g^{(i)}(y, t)$ collects the terms whose powers of $t$ are positive. Then $f^{(i)}(y)$ has Newton segment $I_i$ and its number of non-zero roots counted with multiplicities is $|I_i|$, the length of the interval $I_i$.

\begin{lemma}\label{L:Virotoroots}
Assume that for $i=1,\ldots,l$, the polynomial $f^{(i)}$ is a binomial. Then there exists a bijection between the set of all non-degenerate positive roots of $f_t$ for $t>0$ small enough and the set of non-degenerate positive roots of $f^{(1)},\ldots,f^{(l)}$.
\end{lemma}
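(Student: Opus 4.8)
The plan is to establish the two inequalities $R\ge\sum_{i}r_i$ and $R\le\sum_{i}r_i$ separately, where $R$ is the number of non-degenerate positive roots of $f_t$ for $t$ small, and $r_i\in\{0,1\}$ counts the positive roots of the facial binomial $f^{(i)}$, and then to exhibit the matching explicitly. First I would record the elementary analysis of the $f^{(i)}$. Since $f^{(i)}$ is a binomial whose two exponents are precisely the endpoints of $I_i$, writing it as $c\,y^{p}+c'y^{p'}$ with $p<p'$, its positive roots are the positive solutions of $y^{p'-p}=-c/c'$; such a root exists iff $-c/c'>0$, in which case it is unique. Moreover a one-line computation of $(f^{(i)})'$ at this root $\rho_i$ shows it is nonzero, so the root is automatically simple. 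Hence each $f^{(i)}$ contributes at most one positive root, and any positive root it has is non-degenerate, which already identifies the right-hand side of the claimed bijection as a disjoint set of at most $l$ points.

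For the inequality $R\ge\sum_i r_i$ I would lift each positive root by the implicit function theorem applied to the Viro expansion \eqref{eq:Vir}. Fix $i$ with $r_i=1$ and set $F_i(Y,t):=f_t(Y t^{-a_i})/t^{b_i}=f^{(i)}(Y)+g^{(i)}(Y,t)$. Since $g^{(i)}$ collects only positive powers of $t$, we have $F_i(Y,0)=f^{(i)}(Y)$, and $\partial_Y F_i(\rho_i,0)=(f^{(i)})'(\rho_i)\neq 0$ by the preceding paragraph; $F_i$ is smooth in $(Y,t)$ near $(\rho_i,0)$ for $Y>0$. The implicit function theorem then produces a unique branch $Y_i(t)\to\rho_i$ with $F_i(Y_i(t),t)=0$, and hence a non-degenerate positive root $y_i(t)=Y_i(t)\,t^{-a_i}$ of $f_t$. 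Because the slopes of the lower hull are strictly increasing along it, the scales $t^{-a_i}$ are pairwise distinct, so the roots $y_i(t)$ attached to different faces are distinct for $t$ small; within a single face there is nothing to separate since $f^{(i)}$ has at most one positive root. This yields an injection from $\bigsqcup_i\{\text{positive roots of }f^{(i)}\}$ into the positive roots of $f_t$.

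The reverse inequality $R\le\sum_i r_i$ is the heart of the matter, and I would prove it by a tropical scaling argument showing that no \emph{other} positive roots can occur. To a candidate positive root $y>0$ associate its scale $a$ via $y=Y t^{-a}$ with $Y$ of order one. Writing $f_t(Y t^{-a})=\sum_{p,q}c_{p,q}Y^p t^{\,q-ap}$, the terms of smallest $t$-exponent are those on the lowest supporting line of $D$ of slope $a$. If $a$ lies strictly between two consecutive slopes $a_i<a_{i+1}$, or beyond the extreme slopes, this lowest supporting line meets $D$ in a single vertex, so $f_t(Yt^{-a})$ is dominated by one monomial and is nonzero for $t$ small, uniformly for $Y$ in compact subsets of $(0,\infty)$. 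If $a=a_i$, the dominant part is exactly $f^{(i)}(Y)t^{b_i}$, which is bounded away from zero whenever $Y$ stays in a compact subset of $(0,\infty)$ avoiding $\rho_i$. A compactness argument then gives constants $0<\delta<M$ and $t_0>0$ such that for all $0<t<t_0$ and all $y>0$, either $f_t(y)\neq 0$, or $y=Yt^{-a_i}$ with $Y\in[\delta,M]$ lying in a prescribed small neighbourhood of some $\rho_i$; by the uniqueness in the implicit function theorem, such a $y$ must coincide with $y_i(t)$. Hence the lifts $y_i(t)$ exhaust all positive roots, the injection above is a bijection, and non-degeneracy is preserved throughout.

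The main obstacle I anticipate is precisely this exhaustion step: controlling the \emph{transition regions} between consecutive scales $t^{-a_i}$ and the unbounded regimes $Y\to 0,\infty$ and $a\to\pm\infty$, and making the dominant-monomial estimates \emph{uniform} in $t$. This requires a careful reading of the lower-hull combinatorics (which slope selects which vertex or edge) together with a compactness argument covering the whole positive axis; the binomial hypothesis enters here only to guarantee that each facial root $\rho_i$ is simple, so that the implicit function theorem and the separation of roots within a face go through cleanly.
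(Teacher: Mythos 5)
Your proposal matches the paper's proof on the existence half: the paper perturbs the simple positive root $r$ of each binomial $f^{(i)}$ to a nearby root $r_{i,t}$ of $f^{(i)}(y)+g^{(i)}(y,t)$, places all such roots in a fixed compact interval $K\subset\ ]0,+\infty[$, and uses the disjointness of the intervals $t^{-a_1}K,\ldots,t^{-a_l}K$ for $t$ small to produce the distinct positive roots of $f_t$ --- exactly your implicit-function-theorem lifting plus separation of scales. Where you genuinely diverge is the exhaustion step, which you rightly call the heart of the matter: the paper disposes of it in a single sentence, asserting that roots of $f_t(yt^{-a_i})/t^{b_i}$ close to a point $r$ are positive only if $r$ is positive and that their number ``is determined by the first term $f^{(i)}(y)$'' --- that is, it invokes the standard clustering theory of Viro polynomials recalled from ~\cite{BBS} rather than proving it. Your tropical scaling-plus-compactness argument is a legitimate self-contained substitute, but the transition regimes you flag (scales drifting between consecutive $a_i$, the rescaled variable tending to $0$ or $\infty$ at a fixed scale, and the extreme scales) are precisely the laborious part, and your sketch leaves them open. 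The painless way to close that gap, and what implicitly underlies the paper's citation, is complex root counting: for $t>0$ small, $f_t$ has exactly $d-p_0$ nonzero complex roots counted with multiplicity; Rouch\'e's theorem gives, for each nonzero root $r$ of $f^{(i)}$ of multiplicity $m$, at least $m$ roots of $f_t$ in a small disc around $r$ after rescaling by $t^{a_i}$, and these clusters are pairwise disjoint for small $t$; since $\sum_i |I_i| = d-p_0$, the clusters account for \emph{every} nonzero root of $f_t$, so no positive root can live in a transition region, and a cluster around a negative or non-real $r$ contains no positive root of $f_t$. This single count replaces all of the uniform dominant-monomial estimates you were bracing for, while your route has the mild advantage of staying entirely over the reals.
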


\begin{proof}
Since $f^{(i)}(y)$ is a binomial, it has at most one positive root $r$ which is simple, and there will be a positive root $r_{i,t}$ of $$f^{(i)}(y) + g^{(i)}(y,t)$$ near such $r$ for $t$ small enough. Let $K\subset\ ]0,+\infty[$ denote a compact interval containing the positive root of $f^{(i)}$ for $i=1,\ldots,l$. Then, for $t>0$ small enough, the interval $K$ contains the positive root $r_{i,t}$ of $f_t(yt^{-a_i})/t^{b_i}$. Moreover, the intervals $t^{-a_1}K,\ldots,t^{-a_l}K$ are disjoint for $t>0$ small enough. This gives $l$ positive roots of $f_t$ for $t>0$ small enough. Roots of $f_t(yt^{-a_i})/t^{b_i}$ which are close to a point $r$ are positive only if $r$ is positive, and the number of these roots is determined by the first term $f^{(i)}(y)$. Since $f^{(i)}(y)$ is a binomial, it has only one simple positive root.
\end{proof} To simplify the notations, set $p_0=0$, $p_1=\lambda_1$, $p_2=-\lambda_2$, $p_3=\lambda_1 + \lambda_3,\ldots$ and $p_{n+1}=\sum_{j=0}^{n/2}\lambda_{2j+1}$ if $n$ is even and $p_{n+1}=-\sum_{j=1}^{(n+1)/2}\lambda_{2j}$ if $n$ is odd. Then by assumption, we have $p_0<p_1<\cdots<p_{n+1}$. Set $h_0=0$ and choose real numbers  $h_1,\ldots,h_{n+1}$ such that the lower part $L$ of the convex hull of $\lbrace (p_i,h_i)|\ i=0,\ldots,n+1 \rbrace$ consists of the segments $[(p_i,h_i),(p_{i+1},h_{i+1})]$ for $i=0,\ldots,n$. Therefore, projecting $L$ to $\mathbb{R}$ via the map $\mathbb{R}^2\rightarrow\mathbb{R}$ forgetting the last coordinate, we get the subdivision of $[0,p_{n+1}]$ by the intervals $[p_i,p_{i+1}]$ (see Figure ~\ref{Fig:Graph0}). Set $\alpha_1=h_1/p_1$, $\alpha_2=h_2/p_2$ and $$\alpha_i=\frac{h_i -h_{i-2}}{p_i -p_{i-2}} \text{\quad for \quad} i=3,\ldots,n+1.$$

\begin{figure}[H]
\centering
\includegraphics[scale=1.4]{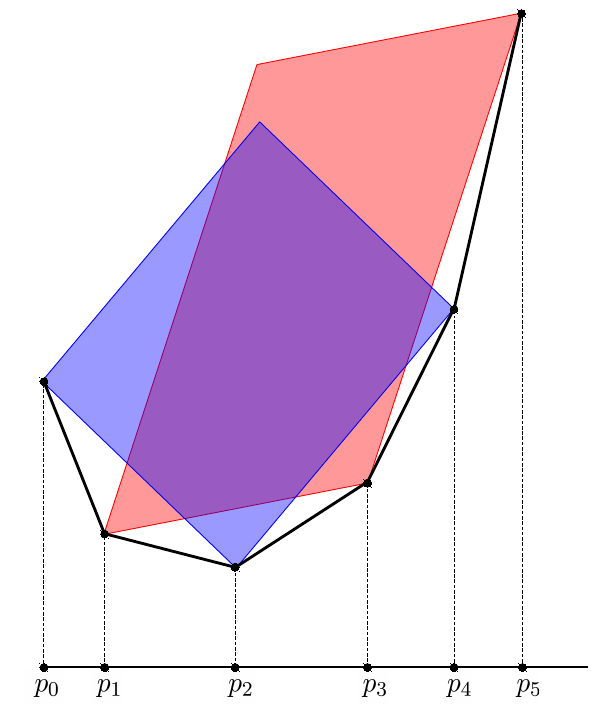}
\caption{The lower hull $L$ of $G_t$ for $n=4$.}
\label{Fig:Graph0}
\end{figure}

\begin{proposition}\label{Propn+1}
For $t>0$ small enough the polynomial ~\eqref{eq:Gale2} has $n+1$ roots in $\Delta_{+,t}=]0,+\infty[$.
\end{proposition}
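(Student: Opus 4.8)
The plan is to realize $G_t$ from~\eqref{eq:Gale2} as a Viro polynomial and apply Lemma~\ref{L:Virotoroots}, so that the whole task reduces to identifying the lower hull of the support of $G_t$ in the $(p,q)$-plane and checking that every facial subpolynomial is a binomial with a positive root. Write $G_t = A_t - B_t$, where $A_t = \prod_{j=0}^{\lfloor n/2\rfloor} P_{2j+1,t}^{\lambda_{2j+1}}$ is the positive product and $B_t = \prod_{j=1}^{\lfloor (n+1)/2\rfloor} P_{2j,t}^{-\lambda_{2j}}$ is the negative one. Since $P_{1,t}=t^{\alpha_1}y$ is a monomial and every other factor is $1+t^{\alpha_i}y$, expanding $A_t$ and $B_t$ yields monomials with nonnegative coefficients; hence every monomial of $G_t$ inherits its sign ($+$ from $A_t$, $-$ from $B_t$). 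First I would record the support: a choice of degree in each binomial factor contributes a point $(p,q)$ whose first coordinate is the total chosen degree (plus $\lambda_1$ for $A_t$) and whose second coordinate is the corresponding sum of the $\alpha_i$ weighted by those degrees, so each product sits on the Minkowski sum of the segments cut out by its factors.

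The core computation is that the points $(p_i,h_i)$ are exactly the vertices of the lower hull. Using $\alpha_{2j+1}(p_{2j+1}-p_{2j-1})=\alpha_{2j+1}\lambda_{2j+1}=h_{2j+1}-h_{2j-1}$ and the analogous identity $\alpha_{2j}(p_{2j}-p_{2j-2})=h_{2j}-h_{2j-2}$ (both immediate from the definitions of the $\alpha_i$), a telescoping sum shows that the monomial of $A_t$ obtained by taking the top degree in $P_{3,t},\dots,P_{2k+1,t}$ and the constant term elsewhere lies exactly at $(p_{2k+1},h_{2k+1})$, and likewise that the monomial of $B_t$ taking the top degree in $P_{2,t},\dots,P_{2k,t}$ lies at $(p_{2k},h_{2k})$. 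Because the $\alpha_i$ are chord slopes of the convex chain $L$ through the $(p_i,h_i)$, they satisfy $\alpha_3<\alpha_5<\cdots$ and $\alpha_2<\alpha_4<\cdots$; a greedy linear-programming argument then shows these are the \emph{unique} lowest monomials of $A_t$, resp.\ $B_t$, at those degrees, each with coefficient $1$. Thus $(p_i,h_i)$ lies in the support carrying a single monomial of coefficient $+1$ (odd $i$, from $A_t$) or $-1$ (even $i$, from $B_t$).

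It remains to show that the lower hull is precisely $L$ and that each edge produces a binomial, and this is the step I expect to be the most delicate to write cleanly, since it is where the convexity of $L$ does all the work. For a degree $p$ strictly between two consecutive vertices $p_i<p_{i+1}$, the lowest monomial of $A_t$ over $p$ lies on the chord of $L$ joining the two odd vertices that straddle $p$, and the lowest monomial of $B_t$ lies on the chord joining the two even vertices that straddle $p$; since both chords span two edges of $L$, convexity forces each to lie strictly above the edge $[(p_i,h_i),(p_{i+1},h_{i+1})]$ on the open interval $(p_i,p_{i+1})$. The identical comparison, applied at a vertex, shows that at $(p_i,h_i)$ the contribution of the product not producing that vertex is strictly higher, so no cancellation occurs. (At the extreme degrees $p_0=0$ and $p_{n+1}$ only one of the two products reaches, which only simplifies matters.) Consequently the lower hull of the support of $G_t$ is exactly $L$, its edges are $e_0,\dots,e_n$, and the facial subpolynomial attached to $e_i$ is the binomial $\pm(y^{p_{i+1}}-y^{p_i})$, whose unique positive root $y=1$ is simple.

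Finally I would invoke Lemma~\ref{L:Virotoroots} with $l=n+1$, the number of edges $e_0,\dots,e_n$: since each facial subpolynomial is a binomial with exactly one non-degenerate positive root, there is a bijection between the positive roots of $G_t$ for $t>0$ small and the $n+1$ positive roots of the $f^{(i)}$. As $\Delta_{+,t}=\,]0,+\infty[$, all of these roots lie in $\Delta_{+,t}$, so $G_t$ has exactly $n+1$ roots there. The main obstacle is the convexity bookkeeping of the third paragraph, namely verifying that neither an intermediate degree nor a cross-term from the other product ever reaches an edge of $L$; for this the identities $\alpha_i(p_i-p_{i-2})=h_i-h_{i-2}$ together with the strict monotonicity of the chord slopes are the essential inputs.
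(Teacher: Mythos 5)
Your proposal is correct and takes essentially the same route as the paper: decompose $G_t$ into the odd and even products, show that the lower hull of the support of $G_t$ is exactly $L$ with one vertex alternately contributed by each product, observe that each facial subpolynomial is therefore a binomial with coefficients of opposite signs (hence a single simple positive root), and conclude via Lemma~\ref{L:Virotoroots}. The only difference is one of detail: the paper declares the computation of the two lower hulls ``easy to see,'' whereas you spell out the telescoping identities $\alpha_i(p_i-p_{i-2})=h_i-h_{i-2}$, the greedy/monotone-slope argument for uniqueness of the vertex monomials, and the convexity comparison ruling out cancellation --- all of which are the correct justifications of that step.
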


\begin{proof}
It is easy to see that the lower hull of the Viro polynomial

\begin{equation}\label{eq:odd}
\prod_{j=0}^{\lfloor n/2\rfloor} P_{2j+1,t}^{\lambda_{2j+1}}(y)
\end{equation} is composed of the segments $[(p_{2j+1},h_{2j+1}),(p_{2j+3},h_{2j+3})]$ for $j=0,\ldots,\lfloor n/2\rfloor -1$. Similarly, the lower hull of 
\begin{equation}\label{eq:even}
-\prod_{j=1}^{\lfloor (n+1)/2\rfloor} P_{2j,t}^{-\lambda_{2j}}(y)
\end{equation} is composed of the segments $[(p_{2j-2},h_{2j-2}),(p_{2j},h_{2j})]$ for $j=1,\ldots,\lfloor( n+1)/2\rfloor$. It follows that the lower hull of the Viro polynomial $G_t$ is $L$. Now we apply Lemma ~\ref{L:Virotoroots} to $G_t$. For $i=0,\ldots,n$, the facial subpolynomial $G^{(i)}$ corresponding to the segment $[(p_i,h_i),(p_{i+1},h_{i+1})]\subset L$ is a binomial where one monomial comes from ~\eqref{eq:odd} and the other comes from ~\eqref{eq:even}. Consequently, this binomial has coefficients of different signs and thus it has one simple positive root. Therefore by Lemma ~\ref{L:Virotoroots}, the polynomial $G_t$ has $n+1$ non-degenerate positive roots for $t>0$ small enough.
\end{proof}
\begin{example}\label{Ex:3}
 Choose for $i=0,\ldots,n$, the slope of the segment $[(p_i,h_i),(p_{i+1},h_{i+1})]$ of $L$ to be equal to $i$. We compute explicitly the values $\alpha_1,\ldots,\alpha_{n+1}$ of the exponent of $t$ appearing respectively in $P_{1,t},\ldots,P_{n+1,t}$. We have $h_1=0$, and $$i = \frac{h_{i+1} - h_{i}}{p_{i+1} - p_i}\text{ \quad for\ } i=0,\ldots,n.$$ Since $\alpha_1=0$ and for $i=0,\ldots,n-1$, we have $\alpha_{i+2}=(h_{i+2} - h_i)/(p_{i+2} - p_i),$ then $$\alpha_{i+2}=i+\frac{p_{i+2} - p_{i+1}}{p_{i+2}-p_i}.$$ Note that $p_{i+2}-p_i=\lambda_i$ if $i$ is odd, and $p_{i+2}-p_i=-\lambda_i$ if $i$ is even. Moreover, we have $$p_{i+2} - p_{i+1}=\sum_{j=0}^{(i+1)/2}\lambda_{2j+1} + \sum_{j=1}^{(i+1)/2}\lambda_{2j} \text{\quad if $i$ is odd and\quad}-\sum_{j=1}^{(i+2)/2}\lambda_{2j} - \sum_{j=0}^{i/2}\lambda_{2j+1}\text{\quad if $i$ is even}.$$ Therefore, $$\displaystyle \alpha_{i+2}=i +\frac{\sum_{j=0}^{\lfloor i+1\rfloor/2}\lambda_{2j+1} + \sum_{j=1}^{\lfloor i+2\rfloor/2}\lambda_{2j}}{\lambda_i}.$$\qed 

\end{example}

 \providecommand{\bysame}{\leavevmode\hbox to3em{\hrulefill}\thinspace}
\providecommand{\MR}{\relax\ifhmode\unskip\space\fi MR }
% \MRhref is called by the amsart/book/proc definition of \MR.
\providecommand{\MRhref}[2]{%
  \href{http://www.ams.org/mathscinet-getitem?mr=#1}{#2}
}
\providecommand{\href}[2]{#2}

\end{document}